\documentclass[english]{article}
\usepackage[margin=2.5cm]{geometry}
\usepackage[hyphens]{url}
\usepackage{hyperref}
\usepackage[hyphenbreaks]{breakurl}
\usepackage{amsthm,amsfonts,amssymb,amsmath}
\usepackage{graphicx}
\usepackage{bbm}

\newtheorem{theorem}{Theorem}[section]
\newtheorem{proposition}[theorem]{Proposition}
\newtheorem{lemma}[theorem]{Lemma}

\newtheorem{claim}[theorem]{Claim}

\newtheorem{question}[theorem]{Question}

\theoremstyle{definition}

\numberwithin{equation}{section}
\numberwithin{figure}{section}
\allowdisplaybreaks

\usepackage{babel}
\usepackage[T1]{fontenc}
\usepackage[utf8]{inputenc}

\setlength{\parskip}{\smallskipamount} 
\setlength{\parindent}{0pt}

\newcommand{\eps}{\varepsilon}
\newcommand{\su}{\subseteq}

\newcommand{\F}{\mathbb{F}}
\newcommand{\Fpn}{\mathbb{F}_p^n}
\newcommand{\Z}{\mathbb{Z}}

\newcommand{\EE}{\mathbb{E}}
\newcommand{\s}{\mathfrak{s}}

\begin{document}
\title{\vspace{-1.3cm}
On the Erd\H{o}s--Ginzburg--Ziv Problem in large dimension}
\author{Lisa Sauermann\thanks{Department of Mathematics, Massachusetts Institute of Technology, Cambridge, MA.
Email: \href{lsauerma@mit.edu}{\nolinkurl{lsauerma@mit.edu}}. Research supported by NSF Award DMS-2100157 and a Sloan Research Fellowship.}\ \and Dmitrii Zakharov\thanks{Department of Mathematics, Massachusetts Institute of Technology, Cambridge, MA.
Email: \href{zakhdm@mit.edu}{\nolinkurl{zakhdm@mit.edu}}.}}
\date{}

\maketitle

\vspace{-0.5cm}

\begin{abstract}\noindent
The Erd\H{o}s--Ginzburg--Ziv Problem is a classical extremal problem in discrete geometry. Given $m$ and $n$, the problem asks about the smallest number $s$ such that among any $s$ points in the integer lattice $\mathbb{Z}^n$ one can find $m$ points whose centroid is again a lattice point. Despite of a lot of attention over the last 50 years, this problem is far from well-understood. For fixed dimension $n$, Alon and Dubiner proved that the answer grows linearly with $m$. In this paper, we focus on the opposite case, where the number $m$ is fixed and the dimension $n$ is large. We drastically improve the previous upper bounds in this regime, showing that for every $\varepsilon>0$ the answer is at most $D_{\varepsilon,m}\cdot (C_{\varepsilon}m^{\varepsilon})^n$ for all $m$ and $n$. Our proof combines (a consequence of) the slice rank polynomial method with a higher-uniformity version of the Balog--Szemer\'{e}di--Gowers Theorem due to Borenstein and Croot.
\end{abstract}

\section{Introduction}

For given positive integers $m$ and $n$, what is the minimum number $s$ such that among any $s$ points in the $n$-dimensional integer lattice $\mathbb{Z}^n$ one can always find $m$ points whose centroid is again a lattice point in $\mathbb{Z}^n$? This problem is called the Erd\H{o}s--Ginzburg--Ziv Problem, and its answer is denoted by $\s(\mathbb{Z}_m^n)$ and called the \emph{Erd\H{o}s--Ginzburg--Ziv constant} of $\mathbb{Z}_m^n$. This notation reflects that the problem can be naturally translated into $\mathbb{Z}_m^n$ (where one then asks about the smallest $s$ such that any sequence of length $s$ of elements of $\mathbb{Z}_m^n$ contains a subsequence of length $m$ summing to zero).


This problem has been studied for fifty years (see e.g.\ \cite{harborth, kemnitz}), and is still wide open despite of receiving a lot of attention (in particular, over the past five years more than twenty papers were published on this topic). Only very few values of $\s(\mathbb{Z}_m^n)$ are known exactly: For $n=1$ Erd\H{o}s, Ginzburg, and Ziv \cite{egz} proved that $\s(\mathbb{Z}_m^1)=2m-1$, and for $n=2$ Reiher \cite{reiher} proved that $\s(\mathbb{Z}_m^2)=4m-3$. The only other infinite family of known values is when $m$ is a power of $2$, then $\s(\mathbb{Z}_{m}^n)=(m-1)\cdot 2^n+1$ as established by Harborth~\cite{harborth}.

Furthermore, and maybe more importantly, the growth behaviour of the function $\s(\mathbb{Z}_m^n)$ is far from being understood. For fixed dimension $n$, Alon and Dubiner \cite{alon-dubiner} proved that $\s(\mathbb{Z}_m^n)$ grows linearly with $m$. Improving their bound for the linearity constant, the second author \cite{dima-egz} furthermore showed that the bound $\s(\mathbb{Z}_m^n)\le 4^n\cdot m$  holds for every positive integer $m$ all of whose prime factors are sufficiently large with respect to $n$.

However, in the opposite regime for fixed $m$, there was an enormous gap between the upper and lower bounds. It turns out that in this regime, the problem can essentially be reduced (up to constant factors) to the case where $m$ is a prime. By the pigeonhole principle, one can easily obtain an upper bound of $\s(\mathbb{Z}_m^n)\le m\cdot m^n$ (as first observed by Harborth \cite{harborth}). This trivial bound was improved  to an upper bound of the form $C_m\cdot (\Gamma_m)^n$ for every fixed prime $m\ge 5$ by Naslund \cite{naslund} and for $m=3$ by Ellenberg--Gijswijt \cite{ellenberg-gijswijt}, where $C_m$ and $\Gamma_m$ are constants only depending on $m$ with $0.84m \le \Gamma_m\le 0.92m$. Note that the base $\Gamma_m$ of the main term $(\Gamma_m)^n$ in Naslund's bound is smaller than the base $m$ in the corresponding term $m^n$ in the trivial bound, but the base $\Gamma_m$ is still linear in $m$. This was improved by the first author \cite{lisa-egz}, who showed a bound of the form $C_m\cdot (2\sqrt{m})^n$ for every fixed prime $m\ge 5$, where again $C_m$ is a constant only depending on $m$. Here, the base is of the form $m^{1/2+o(1)}$, whereas the bases for the previous bounds were of the form $m^{1-o(1)}$ with $m$ being the base in the trivial bound. As discussed below, the $\sqrt{m}$ term in the base in this bound constitutes an important barrier for this problem.

Breaking this barrier, we drastically improve these upper bounds to an upper bound with base $m^{o(1)}$. More precisely, we show the following theorem bounding $\s(\mathbb{Z}_m^n)$ for any fixed integer $m$ and large dimension $n$.

\begin{theorem}\label{thm-egz-integer-m}
    For every fixed $\eps>0$ and every fixed integer $m\ge 2$, we have $\s(\mathbb{Z}_m^n)\le D_{\eps,m}\cdot (C_\eps m^{\eps})^n$ for all $n$. Here, $C_\eps$ is a constant only depending on $\eps$, and $D_{\eps,m}$ is a constant only depending on $\eps$ and $m$.
\end{theorem}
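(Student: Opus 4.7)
The plan is to combine the two ingredients named in the abstract: the slice-rank bound $\s(\Z_p^n) \le C_p (2\sqrt{p})^n$ from \cite{lisa-egz} and the higher-uniformity Balog--Szemer\'edi--Gowers theorem of Borenstein and Croot. By standard reductions over the prime-power factorization of $m$ (any losses absorbed into $D_{\eps,m}$), I would first assume that $m=p$ is prime. Fix a sequence $A \subseteq \Z_p^n$ of length $s$ containing no zero-sum subsequence of length $p$; the goal is to show $s \le D_{\eps,p}(C_\eps p^\eps)^n$.

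The first step is to convert the zero-sum-free hypothesis into a higher additive-energy statement. Since every $p$-tuple in $A^p$ sums to a nonzero element of $\Z_p^n$, the $p$-fold additive energy
$$E_p(A) := \#\{(a_i,b_i)_{i=1}^p \in A^{2p} : \textstyle\sum_i a_i = \sum_i b_i\}$$
is at least $s^{2p}/(p^n - 1)$ by Cauchy--Schwarz. For $s$ in the nontrivial range between the target $p^{\eps n}$ and the slice-rank bound $(2\sqrt p)^n$, this energy greatly exceeds the ``uniform'' prediction, which places us in the hypotheses of the Borenstein--Croot higher BSG theorem. That theorem yields a subset $A' \subseteq A$ with $|A'| \ge c_\eps \cdot s$ that has small higher doubling; by a Freiman--Ruzsa-type structure theorem in $\Z_p^n$, $A'$ is essentially contained in a coset $v+W$ of a subgroup $W \le \Z_p^n$ of dimension $n' \le (1-\delta(\eps))n$ for some $\delta(\eps) > 0$. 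Crucially, since any $p$ elements of $v+W$ sum to $pv + w = w \in W$, the EGZ problem on $v+W$ is equivalent to the EGZ problem on $W \cong \Z_p^{n'}$ of strictly smaller dimension.

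Iterating this dimension reduction $O_\eps(1)$ times, and finally invoking the slice-rank bound from \cite{lisa-egz} in the shrunken dimension $n_{\text{final}} \le 2\eps \cdot n$, I expect to obtain the claimed exponent $C_\eps p^\eps$. The main obstacle is the quantitative calibration of the three steps: the energy excess from zero-sum freeness, the Borenstein--Croot extraction guarantee, and the slice-rank savings in reduced dimension must be tuned so that each iteration loses only a multiplicative factor absorbable into $C_\eps$ and $D_{\eps,m}$, while shrinking the effective dimension by a definite geometric factor $1-\delta(\eps) > 0$. A secondary difficulty is the Freiman--Ruzsa step in the modular setting $\Z_p^n$, where passing from ``small higher doubling'' to ``low-dimensional coset'' would in general invoke polynomial Freiman--Ruzsa, but may be bypassed by exploiting the specific form of the structured set delivered by Borenstein--Croot.
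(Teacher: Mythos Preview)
Your proposal has a fundamental gap at the very first step, the energy lower bound. The claim that every $p$-tuple in $A^p$ sums to a nonzero element is false: for any single term $a$ of the sequence, the tuple $(a,\dots,a)$ sums to $pa=0$. If you repair this by restricting to $p$-tuples with distinct indices (which is what the zero-sum-free hypothesis actually controls), the Cauchy--Schwarz bound becomes $E_p(A)\gtrsim s^{2p}/(p^n-1)$, which exceeds the uniform prediction $s^{2p}/p^n$ only by a factor $p^n/(p^n-1)=1+o(1)$. There is no genuine energy gain here: a zero-sum-free sequence need not have abnormally large additive energy, and the Borenstein--Croot theorem (in the form used in the paper, Theorem~\ref{thm-borenstein-croot}) requires a large collection $S\subseteq A^k$ whose sums lie in a set of size $\le |A|^c$ for some fixed $c$, i.e.\ a polynomial-type concentration, whereas your argument only shows the sums avoid a single point. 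Consequently the entire chain (BSG $\to$ Freiman--Ruzsa $\to$ dimension reduction) never gets off the ground.

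The paper obtains the concentration input to Borenstein--Croot from a completely different source. One first uses the slice-rank method not as a black-box size bound but to produce a solution to $x_1+\dots+x_p=0$ in which no vector repeats more than $p/\ell$ times (Proposition~\ref{prop-repetitions-solutions}), then splits it into $\ell$-tuples of distinct vectors and tries to greedily replace each $\ell$-tuple by another with the same sum, disjoint from the rest. An $\ell$-tuple is \emph{bad} if this replacement fails, which forces its sum $w$ to have a size-$\le p$ cover of all representations. If few $\ell$-tuples are bad, a random subset avoids them all and the greedy process succeeds. If many are bad, a counting argument (using the small cover and the assumption $|\ell A|\le |A|^c$, which holds initially with $c=1/\eps$ since $|\ell A|\le p^n\le |A|^{1/\eps}$) shows that many $(\ell-1)$-fold sums concentrate on a set of size $\le |A|^{c-3/4}$; \emph{this} is what feeds Borenstein--Croot, yielding $A'\subseteq A$ with $|\ell' A'|\le |A'|^{c-1/2}$, and one inducts on $c$ rather than on the dimension $n$.
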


As mentioned above, the problem of upper-bounding $\s(\mathbb{Z}_m^n)$ for fixed $m$ and large $n$ can easily be reduced to the case where $m=p$ is a prime. The problem is then essentially equivalent (up to constant factors depending on $m=p$) to the following additive combinatorics problem: For a fixed prime $p$ and large $n$, what is the maximum possible size of a subset of $\Fpn$ not containing $p$ distinct vectors with sum zero? Our upper bound for $\s(\mathbb{Z}_m^n)$  in Theorem \ref{thm-egz-integer-m} is obtained by proving the following new upper bound for this additive combinatorics problem.

\begin{theorem}\label{thm-main}
For every fixed $\eps>0$ and every fixed prime $p$, the following holds for all $n$. For any subset $A\su \Fpn$ not containing distinct vectors $x_1,\dots,x_p\in A$ with $x_1+\dots+x_p=0$, we have $|A|\le D_{\eps,p}\cdot (C_\eps p^{\eps})^n$. Here, $C_\eps$ is a constant only depending on $\eps$, and $D_{\eps,p}$ is a constant only depending on $\eps$ and $p$.
\end{theorem}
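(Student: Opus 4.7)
I would approach Theorem~\ref{thm-main} by strong induction on $n$, with each step a dichotomy depending on how much additive structure $A$ possesses. After the standard reduction to prime modulus, suppose $A \subseteq \F_p^n$ is EGZ-free and $|A|$ is close to the putative maximum $D_{\eps,p}(C_\eps p^\eps)^n$. I would quantify ``additive structure'' through the number of collisions in partial-sum sets $S_k = \{x_1 + \cdots + x_k : x_i \in A\}$, and branch on whether this quantity is large or small.

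In the unstructured case (few collisions among partial sums), I would apply a refined slice rank polynomial computation of the type underlying Sauermann's $(2\sqrt{p})^n$ bound. The $2\sqrt{p}$ barrier arises because one does not exploit correlations among the off-diagonal terms in the slice rank expansion: since $A$ is EGZ-free, every zero-sum tuple $(x_1, \ldots, x_p) \in A^p$ carries a repeated coordinate, and the off-diagonal terms count precisely such repetitions. The unstructured hypothesis should suppress exactly these terms, yielding a bound matching $(C_\eps p^\eps)^n$. Implementing this rigorously would likely require applying slice rank to a parameterized family of $p$-tensors weighted by indicators of partial-sum events, and optimizing over the choice of tensor.

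In the structured case (many partial-sum collisions), I would invoke the higher-uniformity Balog--Szemer\'edi--Gowers theorem of Borenstein--Croot. Its output is a subset $A' \subseteq A$ with $|A'| \ge c_\eps |A|$ contained in a translate of a subspace $V \subseteq \F_p^n$ of codimension at least $\delta(\eps)\cdot n$ for some $\delta(\eps) > 0$. Since $A'$ remains EGZ-free, the inductive hypothesis applied inside $V$ (a copy of $\F_p^{(1-\delta)n}$) closes this case. Iterating this dimension reduction by a constant fraction, for $O(\log n)$ steps with only a constant density loss per step, the base case of constant dimension is reached and the bound $(C_\eps p^\eps)^n$ propagates, with all accumulated constants absorbed into $D_{\eps,p}$.

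The main obstacle will be calibrating the dichotomy so that both horns deliver the same bound. The classical BSG theorem would only yield a subspace of constant codimension, far too weak for the iteration to close; it is precisely the higher-uniformity variant that delivers near-linear codimension in exchange for a stronger hypothesis on higher-order additive structure. Symmetrically, naively applying slice rank gives only $(2\sqrt{p})^n$; the improvement in the unstructured case requires a genuine refinement of the polynomial method whose strength matches the quantitative non-structure guaranteed by the failure of BSG's hypothesis. Reconciling these two thresholds -- ensuring that the negation of the Borenstein--Croot hypothesis is exactly enough to power the improved slice rank bound -- is the delicate part, and will likely require an intermediate ``regularisation'' of $A$ (e.g.\ passing to a dense subset with balanced statistics on translates of hyperplanes) so that both tools operate in their favourable regime simultaneously.
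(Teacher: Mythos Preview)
Your proposal has the right two ingredients --- slice rank and the Borenstein--Croot higher-uniformity BSG theorem --- but both branches of your dichotomy rest on claims that do not hold as stated, and the overall inductive scheme differs from the one that actually works.

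On the structured side: the Borenstein--Croot theorem does \emph{not} output a subset contained in a coset of a subspace of linear codimension. Its conclusion is that there exists $A'\subseteq A$ with $|A'|\ge |A|^{1-\eps}$ (a polynomial density loss, not a constant one) and with small \emph{iterated sumset} $|\ell' A'|\le |A'|^{c(1+\eps\ell')}$ for all $\ell'$. No subspace structure is produced, and there is no dimension drop to induct on. The paper exploits this output differently: it inducts not on $n$ but on the sumset-growth exponent $c$ in an auxiliary statement (``if $|A|$ is large and $|\ell A|\le |A|^c$, then $A$ contains $p$ distinct vectors summing to zero''), and the BSG output lets one pass from exponent $c$ to $c-1/2$.

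On the unstructured side: your ``refined slice rank computation'' that suppresses off-diagonal terms is not a mechanism that has been made to work --- the slice rank method genuinely cannot handle the distinctness constraint directly, and there is no known way to make lack of additive structure feed back into the tensor bound. The paper's idea here is entirely different and is really the heart of the argument. One calls an $\ell$-tuple in $A$ \emph{bad} if the family of $\ell$-subsets of $A$ with the same sum has a cover of size at most $p$. In the ``few bad tuples'' case, one passes (by random sampling) to a large $A'\subseteq A$ containing no bad $\ell$-tuple; slice rank is used only to produce a solution $y_1+\dots+y_p=0$ in $A'$ with each vector repeated at most $p/\ell$ times (this is a genuine but modest application of the multicoloured sum-free theorem), one partitions the $y_i$ into $\ell$-blocks of distinct vectors, and then \emph{greedily replaces} each block by another $\ell$-subset of $A$ with the same sum, disjoint from everything chosen so far --- possible precisely because the block sums are not bad. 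In the ``many bad tuples'' case, a counting argument shows the partial $(\ell-1)$-fold sums concentrate on few values, which is exactly the hypothesis of Borenstein--Croot, and one inducts on $c$. Your proposal is missing this bad-tuple/replacement machinery entirely, and without it neither horn of the dichotomy closes.
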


The previous bounds for $\s(\mathbb{Z}_m^n)$ for fixed $m$ and large $n$ in \cite{naslund,lisa-egz} have also been obtained by studying this additive combinatorics problem, but here we prove much stronger bounds for this problem and hence for $\s(\mathbb{Z}_m^n)$. There is extensive literature and research activity on zero-sum problems in abelian groups (e.g.\ see the survey \cite{gao-geroldinger}), and this additive combinatorics problem is one of the most central problems in this area.

In the case of $p=3$, a subset $A\su \F_3^n$ not containing three distinct vectors $x_1,x_2,x_3\in A$ with $x_1+x_2+x_3=0$ is precisely the same as a three-term progression-free subset $A\su \F_3^n$. The problem of determining the maximum possible size of such a subset is a very famous problem in additive combinatorics, called the \emph{cap-set problem}. In 2017, Ellenberg and Gijsiwjt \cite{ellenberg-gijswijt} achieved a breakthrough on this problem, proving that any subset $A\su \F_3^n$ without a three-term arithmetic progression has size at most $2.756^n$. Hence for $p=3$, in Theorem \ref{thm-main} one has the bound $|A|\le 2.756^n$. 

Ellenberg and Gijsiwjt \cite{ellenberg-gijswijt}  actually proved a more general result, bounding the size of a three-term progression-free subset $A\su \F_p^n$ for any fixed prime $p\ge 3$. Their proof relies on a new polynomial method that was introduced by Croot, Lev and Pach \cite{croot-lev-pach} just a few weeks earlier, and that was shortly afterwards reformulated and generalized by Tao \cite{tao} to what is now called the \emph{slice rank polynomial method}. Since the slice rank polynomial gives the bound $|A|\le 2.756^n$ in Theorem \ref{thm-main} for $p=3$, it is very natural to also try to apply it in the setting of Theorem \ref{thm-main} for larger primes $p$. Unfortunately, the proof for $p=3$ breaks down for larger $p$, since the slice rank polynomial method cannot handle the distinctness condition for the vectors $x_1,\dots,x_p$ in Theorem \ref{thm-main}. This is because, with this distinctness condition, the relevant tensor is not a diagonal tensor anymore, and so one looses control over its slice rank.

The above-mentioned works of Naslund \cite{naslund} and the first author \cite{lisa-egz} obtain a weaker upper bound in the setting of Theorem \ref{thm-main} by using certain manipulations (relying on combinatorial arguments) to reduce to the setting of diagonal tensors for applying the slice rank polynomial method (or variants thereof).

Here, we obtain a much stronger upper bound with a new approach that incorporates both (a consequence of) the slice rank polynomial method and a higher-uniformity version of the Balog--Szemer\'{e}di--Gowers Theorem. These different tools from additive combinatorics have not previously been combined, and we believe that there may be future potential in pursuing such an approach further.

In particular, our approach manages to break the ``multi-colored barrier'' for this problem. Indeed, as discussed in \cite{lisa-egz}, for the ``multi-colored'' version of the setting in Theorem \ref{thm-main} the bound $C_p\cdot (2\sqrt{p})^n$ due to the first author is essentially tight (there is a lower bound of $\sqrt{p}^n$ for even $n$). Thus, an improvement of the bound beyond $\sqrt{p}^n$ needs to use the ``single-set'' setting in Theorem \ref{thm-main} in an essential way (with an argument that is not generalizable to the ``multi-colored'' setting). So far, most arguments relying on the slice rank polynomial method naturally generalized to the ``multi-colored'' setting, and so finding new approaches that are specific to the ``single-set'' setting has been a major challenge. In particular, this challenge also appears for the problem of improving the bounds of Ellenberg--Gijswijt \cite{ellenberg-gijswijt} for the cap-set problem and more generally the problem of bounding the size of three-term progression-free subsets of $\F_p^n$. These bounds are also (essentially) tight in the ``multi-colored setting'' (see \cite{kleinberg-sawin-speyer}), and so an approach specific to the ``single-set'' setting would be needed to overcome this ``multi-colored barrier''. Our approach of blending the slice rank polynomial method with the Balog--Szemer\'{e}di--Gowers Theorem is indeed specific ``single-set setting'', and so we believe that it may be helpful in breaking the ``multi-colored barrier'' in other problems as well.

The particular higher-uniformity version of the Balog--Szemer\'{e}di--Gowers Theorem \cite{balog-szemeredi, gowers} that we are using is due to Borenstein and Croot \cite{borenstein-croot} and is stated in Section \ref{sec-induction-step} (there are also other higher-uniformity versions, see in particular \cite{sudakov-szemeredi-vu}). Besides this theorem and a (consequence of) the slice rank polynomial method (see Section \ref{sec-slice-rank}), our proof also uses combinatorial and probabilistic arguments.

The best known lower bounds for $\s(\mathbb{Z}_m^n)$ for large $n$ are of the form $(m-1)\cdot c^n$ for all odd $m\ge 3$ with $c\approx 2.1398$ and are due to Edel \cite{edel} (in the case of $m=3$, the constant is slightly better, namely $c\approx 2.2180$ due to Tyrell \cite{tyrrell}). This in particular leads to the lower bound $\s(\mathbb{Z}_m^n)\ge 2.1398^n$ for any $m$ which is not a power of $2$ and large $n$  (if $m$ is a power of $2$, then $\s(\mathbb{Z}_{m}^n)=(m-1)\cdot 2^n+1$ is known exactly). Here, the exponential base $2.1398$ is an absolute constant independent of $m$. It is still an open question whether this is the right behaviour, or whether the correct exponential base should depend on $m$:

\begin{question}
Is there an absolute constant $c$, such that for every fixed integer $m\ge 2$ we have $\s(\mathbb{Z}_m^n)\le D_m\cdot c^n$ for all $n$, where $D_m$ is a constant only depending on $m$?
\end{question}

Our upper bound in Theorem \ref{thm-main} does not give such a constant $c$, but instead it gives a term of the form $m^{o(1)}$ (where the exponent $o(1)$ converges to zero for growing $m$). In the opposite regime, where the dimension $n$ is fixed, the second author \cite{dima-egz} showed the bound $\s(\mathbb{Z}_m^n)\le m\cdot 4^n$ if all prime factors of $m$ are sufficiently large with respect to $n$.

\textit{Acknowledgements.} The authors would like to thank Cosmin Pohoata for helpful conversations and for pointing out reference \cite{borenstein-croot}, as well as Jacob Fox for useful comments on an earlier version of this paper.

\textit{Notation.} For a subset $A$ of an additively written abelian group (for us, the group will usually be $\Fpn$), we write $\ell A=A+\dots+A=\{x_1+\dots+x_\ell\mid x_1,\dots,x_\ell\in A\}$, as usual. The cover number of a finite family $\mathcal{F}$ of non-empty subsets $X\su S$ of some ground set $S$ is the size of the smallest subset $S'\su S$ such that $S'$ intersects every set $X\in \mathcal{F}$.

\section{Proof Overview}

\subsection{Proof Structure}

The Balog--Szemer\'{e}di--Gowers Theorem gives, for a subset $A\su \Fpn$ with many solutions to the equation $y_1+y_2=y_1'+y_2'$ with $y_1,y_2,y_1',y_2'\in A$, a subset $A'\su A$ such that the sum-set $A'+A'$ is small. Similarly, the higher uniformity version of the theorem due to Borenstein--Croot \cite{borenstein-croot} gives, under suitable conditions on $A$, a subset $A'\su A$ such that the $\ell$-fold sum-set $\ell A'=A'+\dots+A'$ is small for certain $\ell$. A priori, it is unclear how such a subset $A'$ is useful for finding a solution to $x_1+\dots+x_p=0$ with distinct vectors $x_1,\dots,x_p\in A$.

The key for taking advantage of such a statement lies in the inductive setup for our proof, which allows us to incorporate the higher uniformity Balog--Szemer\'{e}di--Gowers Theorem in interplay with the slice rank polynomial method. The actual statement that we induct on is as follows.

\begin{theorem}\label{thm-technical}
For every $c> 1$, there is a positive integer $\ell$ and a constant $C(c)\ge 1$ such that for every sufficiently large prime $p$ (large enough in terms of $c$) there is a constant $D(c,p)\ge 1$ such that the following holds for all $n$. If $A\su \Fpn$ is a subset of size $|A|\ge D(c,p)\cdot (C(c))^n$ with $|\ell A|=|A+\dots+A|\le |A|^c$, then $A$ contains $p$ distinct vectors $x_1,\dots,x_p\in A$ with $x_1+\dots+x_p=0$.
\end{theorem}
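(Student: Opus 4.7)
I plan to prove Theorem~\ref{thm-technical} by induction on the dimension $n$, choosing $\ell = \ell(c)$, $C(c)$, and $D(c,p)$ jointly so that the induction closes. The base case, for $n$ bounded in terms of $c$ and $p$, is handled by absorbing the trivial EGZ bound $\s(\Z_p^n) \le p^{n+1}$ into $D(c,p)$.

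For the inductive step, suppose $A \su \Fpn$ satisfies $|A| \ge D(c,p) C(c)^n$ and $|\ell A| \le |A|^c$ but contains no $p$ distinct vectors summing to zero. I would combine the two advertised tools as follows. First, apply the higher-uniformity Balog--Szemer\'edi--Gowers theorem of Borenstein--Croot, which takes as input the polynomial sumset bound $|\ell A| \le |A|^c$ and produces a large subset $A_1 \su A$ on which the additive structure is upgraded to something qualitatively stronger (for instance small doubling $|A_1 + A_1| \le K|A_1|$ with $K$ depending only on $c$, or a bounded covering by cosets of a proper subgroup). This converts the ``polynomial sumset'' information hidden in the hypothesis into the kind of bounded-doubling setting in which the slice rank machinery has traction. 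Second, apply the consequence of the slice rank polynomial method from Section~\ref{sec-slice-rank} to $A_1$. Since $p \cdot x = 0$ in characteristic $p$, every $x \in A_1$ yields a diagonal zero-sum solution $(x,\dots,x)$; combined with the hypothesized absence of \emph{distinct} zero-sum $p$-tuples and the additive structure of $A_1$, I expect this to force a positive-density subset $A' \su A_1$ to lie in a proper affine subspace of $\Fpn$. Identifying this subspace with $\F_p^{n'}$ for some $n' < n$ and using the Pl\"unnecke--Ruzsa inequality to control $|\ell A'|$ (after absorbing the resulting constants into $D(c,p)$ and setting $C(c)$ large enough to cover the per-step density loss), the induction hypothesis applied in dimension $n'$ produces $p$ distinct zero-sum vectors in $A' \su A$, contradicting the assumption.

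The main difficulty will be the second step: extracting a \emph{structural} (affine-subspace concentration) conclusion from the slice rank method, rather than merely the size bound $(2\sqrt{p})^n$ of~\cite{lisa-egz}, and crucially with per-step loss bounded by a function of $c$ alone. In particular, $C(c)$ must be independent of $p$, so both the dimension drop $n - n'$ and the density loss must be controlled by $c$ and not by $p$. This is precisely the point at which the ``single-set'' feature of the problem must be essentially used, since any slice-rank argument that generalizes to the multi-colored setting cannot break the $\sqrt{p}^n$ barrier. The interplay with the small-sumset hypothesis provided by BSG is therefore not incidental but essential: it is the vehicle that converts the slice rank input into a structural rather than merely enumerative consequence, and getting the quantitative dependence on $c$ right across the induction is the heart of the argument.
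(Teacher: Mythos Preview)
Your proposal has a genuine gap at precisely the point you yourself flag. The ``second step'' --- extracting affine-subspace concentration from the slice rank method with loss bounded by a function of $c$ alone --- is not something the slice rank method is known to produce in this setting, and the paper does not attempt it. In fact, your use of Borenstein--Croot is also off: applied directly to the hypothesis $|\ell A|\le |A|^c$ (taking $S=A^\ell$), the theorem outputs a subset $A'$ with $|\ell' A'|\le |A'|^{c(1+\eps\ell')}$, i.e.\ a \emph{larger} exponent, not small doubling with a constant $K=K(c)$. So neither half of your inductive step works as stated, and the induction on $n$ never gets off the ground.

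The paper's argument is structurally quite different. The induction is on $c$ (writing $c=h/2$ and inducting on $h\ge 3$), not on $n$. The slice rank input is Proposition~\ref{prop-repetitions-solutions}: any large $A$ contains a solution $y_1+\dots+y_p=0$ in which no vector is repeated more than $p/\ell$ times. Such a solution is partitioned (Lemma~\ref{lem-rainbow-sets}) into $\ell$-tuples of distinct vectors, and each $\ell$-tuple is then greedily \emph{replaced} by another $\ell$-tuple with the same sum, disjoint from the vectors already chosen. This replacement fails only for $\ell$-tuples whose sum $w$ is \emph{bad}, meaning the family of $\ell$-subsets of $A$ summing to $w$ has a cover of size $\le p$. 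If bad $\ell$-tuples are rare, a random subset avoids them and the greedy procedure succeeds. If bad $\ell$-tuples are abundant, then because each bad sum $w\in \ell A$ contributes at most $p$ cover elements, the $(\ell-1)$-fold sums $y_1+\dots+y_{\ell-1}$ of bad tuples concentrate on a set of size $\le |A|^{c-3/4}$; \emph{this} is the input to Borenstein--Croot, which then yields $A'\su A$ with $|\ell' A'|\le |A'|^{c-1/2}$, and the induction hypothesis at $c'=c-1/2$ finishes. The combinatorial ``bad tuple / greedy replacement'' layer is the missing idea in your plan; it is what manufactures a genuinely improved exponent for the BSG step and simultaneously supplies the single-set-specific mechanism that breaks the multi-coloured barrier.
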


We prove this statement inductively (taking $c=h/2$ and inducting on $h=3,4,5,\dots$). Assuming Theorem \ref{thm-technical}, it is not difficult to deduce Theorem \ref{thm-main}:

\begin{proof}[Proof of Theorem \ref{thm-main} assuming Theorem \ref{thm-technical}]
As in Theorem \ref{thm-main}, let $\eps>0$ be fixed. Noting that the statement in Theorem \ref{thm-main} is trivial for $\eps\ge 1$ (since we always have $|A|\le p^n$), we may assume that $0<\eps<1$. Now, let $c=1/\eps$, and let $\ell$ and $C(c)$ be as in Theorem \ref{thm-technical}. Let us choose $C_\eps>C(c)$ large enough such that the statement in Theorem \ref{thm-technical} holds for all primes $p\ge C_\eps$.

As in Theorem \ref{thm-main}, let us now consider a prime $p$. Note that for any prime $p< C_\eps$ and any subset $A\su \Fpn$ we trivially have $|A|\le p^n\le (C_\eps p^{\eps})^n$. Thus, we may assume that $p\ge  C_\eps$, and so there is a constant $D(c,p)$ such that the statement in Theorem \ref{thm-technical} holds. Let $D_{\eps,p}=D(c,p)$.

Suppose that $A\su \Fpn$ is a subset of size $|A|> D_{\eps,p}\cdot (C_\eps p^{\eps})^n$  not containing distinct vectors $x_1,\dots,x_p\in A$ with $x_1+\dots+x_p=0$. Note that we have $|A|> D_{\eps,p}\cdot (C_\eps p^{\eps})^n\ge D(c,p)\cdot (C(c))^n$ and
\[|\ell A|=|A+\dots+A|\le p^n= (p^{\eps n})^c\le |A|^c.\]
Thus, we obtain a contradiction to Theorem \ref{thm-technical}.
\end{proof}

It is also not difficult to show that Theorem \ref{thm-main} implies our main result about Erd\H{o}s--Ginzburg--Ziv constants in Theorem \ref{thm-egz-integer-m}.

\begin{proof}[Proof of Theorem \ref{thm-egz-integer-m} assuming Theorem \ref{thm-main}]
As in the statement of Theorem \ref{thm-egz-integer-m}, let $\eps>0$ and $m\ge 2$ be fixed. We have
\[\s(\mathbb{Z}_m^n)<m\cdot \sum_{p}\frac{\s(\mathbb{Z}_p^n)}{p-1},\]
where the sum is over all prime factors $p$ of $m$ (see, for example, \cite[Lemma 11]{fox-s-egz}). On the other hand, for each prime factor $p$ of $m$, we can bound $\s(\mathbb{Z}_p^n)$ using Theorem \ref{thm-main} as follows. Consider a sequence of elements of $\mathbb{Z}_p^n\cong \Fpn$ without a subsequence of length $p$ summing to zero. Clearly, this sequence can contain at most $p-1$ copies of any particular vector in $\Fpn$. On the other hand, the set $A\su \Fpn$ of all vectors appearing at least once in the sequence does not contain $p$ distinct vectors summing to zero, and so by Theorem \ref{thm-main} we have $|A|\le D_{\eps,p}\cdot (C_\eps p^\eps)^n$. This means that the sequence has length at most $(p-1)\cdot D_{\eps,p}\cdot (C_\eps p^\eps)^n$ and hence $\s(\mathbb{Z}_p^n)\le (p-1)\cdot D_{\eps,p}\cdot (C_\eps p^\eps)^n+1\le (p-1)\cdot (D_{\eps,p}+1)\cdot (C_\eps p^\eps)^n$. Thus, we obtain
\[\s(\mathbb{Z}_m^n)<m\cdot \sum_{p}(D_{\eps,p}+1)\cdot (C_\eps p^\eps)^n,\]
where the sum is again over all prime factors $p$ of $m$. Thus, the desired statement holds when taking the constant $D_{\eps,m}$ in Theorem \ref{thm-egz-integer-m} to be the sum of $m\cdot (D_{\eps,p}+1)$ for the constants $D_{\eps,p}$ in Theorem \ref{thm-main} over all prime factors $p$ of $m$ (and taking $C_\eps$ to be the same constant as in Theorem \ref{thm-main}).
\end{proof}

The main difficulty is of course to prove Theorem \ref{thm-technical}, and the rest of this paper is devoted to this. We start by giving an outline of the main ideas of the proof in the next subsection.

\subsection{Outline of proof of Theorem \ref{thm-technical}}

Noting that Theorem \ref{thm-technical} gets strictly stronger as we increase $c$, we may assume that $c=h/2$ for an integer $h\ge 3$. We will then prove Theorem \ref{thm-technical} by induction on $h$.

To prove the theorem, we need to show that there is a solution to $x_1+\dots+x_p=0$ with $p$ distinct vectors $x_1,\dots,x_p\in A$. Our strategy for finding such a solution is to start with a solution to $x_1+\dots+x_p=0$ where $x_1,\dots,x_p\in A$ are not necessarily distinct, and then modify this solution to make $x_1,\dots,x_p$ distinct. More specifically, to modify the solution we split the vectors $x_1,\dots,x_p$ into $\ell$-tuples (and a small remainder of fewer than $\ell$ vectors), and then we replace each $\ell$-tuple by another $\ell$-tuple of vectors in $A$ with the same sum. Since at every step we replace $\ell$ vectors in $A$ with $\ell$ different vectors in $A$ with the same sum, the sum $x_1+\dots+x_p$ does not change throughout this process, and so at every step our vectors $x_1,\dots,x_p\in A$ form a solution to $x_1+\dots+x_p=0$. The difficulty is, however, to obtain a solution where $x_1,\dots,x_p$ are distinct.

It turns out that using the slice rank polynomial method and some combinatorial arguments, we can ensure that at the start of our process we have a solution to $x_1+\dots+x_p=0$ with $x_1,\dots,x_p\in A$ that can be split into $\ell$-tuples (and fewer than $\ell$ remaining vectors) in such a way that each $\ell$-tuple consists of $\ell$ distinct vectors (and also such that the vectors in the remainder are distinct from each other). Our aim in each step of the process is now to replace one of the $\ell$-tuples with a different $\ell$-tuple of distinct vectors in $A$ with the same sum, such that the $\ell$ vectors in the new $\ell$-tuple are distinct from all the other vectors appearing among our solution to $x_1+\dots+x_p=0$ at that step. If we can do this step by step for each $\ell$-tuple, this greedy procedure will lead to a solution to $x_1+\dots+x_p=0$ with distinct vectors $x_1,\dots,x_p\in A$.

Of course, it may happen that at some step, when we want to replace a certain $\ell$-tuple with sum $w\in \Fpn$, we cannot find an $\ell$-tuple of distinct vectors in $A$ with the same sum $w$ which is disjoint from all vectors currently appearing in our solution to $x_1+\dots+x_p=0$. In this case, every subset $\{y_1,\dots,y_\ell\}\su A$ consisting of distinct vectors $y_1,\dots,y_\ell\in A$ with sum $y_1+\dots+y_\ell=w$ must contain one of the vectors $x_1,\dots,x_p$. Hence the family of subsets $\{y_1,\dots,y_\ell\}\su A$ with distinct elements $y_1,\dots,y_\ell\in A$ with sum $y_1+\dots+y_\ell=w$ must have a cover of size at most $p$. We call a vector $w\in \Fpn$ \emph{bad} if this happens. Furthermore, we call an $\ell$-tuple of $\ell$ distinct vectors in $A$ \emph{bad}, if the sum of the $\ell$ vectors is bad. Then at every step of our process, if the relevant $\ell$-tuple is not bad, we will be able to replace it in the desired way.

Thus, if at the start of our process each of the $\ell$-tuples into which we split our starting solution to $x_1+\dots+x_p=0$ is not a bad $\ell$-tuple, we will be able to run this modification process for each of the $\ell$-tuples and obtain a solution to $x_1+\dots+x_p=0$ with distinct vectors $x_1,\dots,x_p\in A$ in the end. So it suffices to find a solution to $x_1+\dots+x_p=0$ with $x_1,\dots,x_p\in A$ that can be spit into $\ell$-tuples in such a way, that each of these $\ell$-tuples consists of $\ell$ distinct vectors and is not bad (and such that the fewer than $\ell$ remaining vectors are distinct from each other).

If there are only few bad $\ell$-tuples $(y_1,\dots,y_\ell)\in A^\ell$, then by a probabilistic subset sampling argument there is a relatively large subset $A'\su A$ such that there exists no bad $\ell$-tuple $(y_1,\dots,y_\ell)\in A^\ell$ with $y_1,\dots,y_\ell\in A'$. Then, relying on the slice rank polynomial method and further combinatorial arguments, one can find a solution to $x_1+\dots+x_p=0$ with $x_1,\dots,x_p\in A'$ that can be split into $\ell$-tuples in the desired way (and then automatically none of these $\ell$-tuples will be bad). Applying our process as discussed above, we can turn this into a solution to $x_1+\dots+x_p=0$ with $x_1,\dots,x_p\in A$ such that $x_1,\dots,x_p$ are distinct.

For the induction beginning, i.e.\ the case $c=3/2$ in Theorem \ref{thm-technical}, this already suffices. In fact, if $c=3/2$, we can take $\ell=2$ in Theorem \ref{thm-technical} and observe that then by the assumption $|A+A|\le |A|^{3/2}$ the number of bad $2$-tuples is at most $2p\cdot |A|^{3/2}$ (indeed, every possible sum $w=x_1+x_2$ can lead to at most $2p$ bad $2$-tuples $(x_1,x_2)\in A^2$ with $x_1+x_2=w$). Thus, there are only few bad $2$-tuples in $A^2$, and the above argument applies.

In the induction step, we also need to consider a second case, namely that there are many bad $\ell$-tuples $(x_1,\dots,x_\ell)\in A^\ell$. For each bad $\ell$-tuple $(x_1,\dots,x_\ell)$, the sum $w=x_1+\dots+x_\ell$ is bad, and one of the vectors $x_1,\dots,x_\ell$ is in the cover of size at most $p$ of the family of all $\ell$-element subsets of $A$ with sum $w$. Thus, upon reordering the vectors, every bad $\ell$-tuple can be rewritten as $(y_1,\dots,y_{\ell-1},z)$ where $z$ is in the cover of size at most $p$ of the family of all $\ell$-element subsets of $A$ with sum $y_1+\dots+y_{\ell-1}+z$. By our assumption $|\ell A|\le |A|^c$ in Theorem \ref{thm-technical}, there are not so many possibilities for the sum $y_1+\dots+y_\ell+z$, and for each of these possibilities there are at most $p$ choices for $z$. One can now show that there must be many bad $\ell$-tuples $(y_1,\dots,y_{\ell-1},z)\in A^\ell$, for which the sums $y_1+\dots+y_{\ell-1}$ are concentrated on relatively few possible values. This means that one can apply the higher uniformity Balog--Szemer\'{e}di--Gowers Theorem due to Borenstein--Croot \cite{borenstein-croot} (whose precise statement is given in Theorem \ref{thm-borenstein-croot}). This theorem (with suitably chosen parameters) implies that there is a relatively large subset $A'\su A$ with $|\ell'A'|\le |A|^{c-1/2}$ for the value $\ell'$ such that Theorem \ref{thm-technical} holds for $c':=c-1/2$ with $\ell'$ instead of $\ell$ (recall that Theorem \ref{thm-technical} holds for $c-1/2=(h-1)/2$ by our induction hypothesis). Now, by the conclusion of Theorem \ref{thm-technical} for $c'$ and and $\ell'$, we can find a solution to $x_1+\dots+x_p=0$ with distinct vectors $x_1,\dots,x_p\in A'\su A$.

This finishes the outline of our proof of Theorem \ref{thm-technical}. The actual proof can be found in Section \ref{sect-proof-technical-thm}, after some preparations for the proof in Section \ref{sect-preparations}.

\section{Preparations}
\label{sect-preparations}

\subsection{Solutions with not too many repetitions}
\label{sec-slice-rank}

In this subsection, we show that any large subset $A\su \Fpn$ must contain a solution to the equation $x_1+\dots+x_p=0$ with $x_1,\dots,x_p\in A$, such that no vector appears a lot of times among $x_1,\dots,x_p$. The precise statement is given in Proposition \ref{prop-repetitions-solutions} below. This will help us to find a solution to $x_1+\dots+x_p=0$ with $x_1,\dots,x_p\in A$, such that $(x_1,\dots,x_p)$  can be split into $\ell$-tuples (and fewer than $\ell$ remaining vectors) such that each $\ell$-tuple consists of $\ell$ distinct vectors.

The proof of our proposition uses the $k$-coloured Sum-Free Theorem, which can be proved with the slice rank polynomial method. As mentioned in the introduction, the slice rank polynomial method was introduced by Tao \cite{tao} following work of Croot--Lev--Pach \cite{croot-lev-pach} and Ellenberg--Gijswijt \cite{ellenberg-gijswijt}. A proof of the $k$-coloured Sum-Free Theorem following this method can be found in \cite{lovasz-sauermann}.

\begin{theorem}[$k$-coloured Sum-Free Theorem]\label{thm-mulitcolor-sum-free}
Let $k\ge 3$ be an integer, and let $p$ be a prime. For some positive integer $n$, let $(x_1^{(j)},\dots,x_k^{(j)})\in \Fpn\times \dots\times \Fpn$ for $j=1,\dots,L$ be a list of $k$-tuples of vectors in $\Fpn$. Suppose that for all $j_1,\dots,j_k\in \{1,\dots,L\}$, we have
\[x_1^{(j_1)}+\dots+x_k^{(j_k)}=0\quad\text{if and only if}\quad j_1=\dots=j_k.\]
Then we must have $L\le (\Gamma_{p,k})^n$, where
\[\Gamma_{p,k}=\inf_{0<\gamma<1}\frac{1+\gamma+\dots+\gamma^{p-1}}{\gamma^{(p-1)/k}}<p.\]
\end{theorem}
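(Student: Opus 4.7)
The plan is to apply the slice rank polynomial method to a suitable tensor encoding the zero-sum condition. Specifically, I would define the tensor $T \colon \{1,\ldots,L\}^k \to \mathbb{F}_p$ by
$$T(j_1,\ldots,j_k) = \prod_{i=1}^n \left(1 - \bigl((x_1^{(j_1)})_i + \cdots + (x_k^{(j_k)})_i\bigr)^{p-1}\right),$$
so that by Fermat's little theorem each factor equals $1$ when the $i$-th coordinate of $x_1^{(j_1)} + \cdots + x_k^{(j_k)}$ vanishes and $0$ otherwise. Thus $T(j_1,\ldots,j_k) = \mathbf{1}[x_1^{(j_1)} + \cdots + x_k^{(j_k)} = 0]$, which by hypothesis is the diagonal tensor $\mathbf{1}[j_1 = \cdots = j_k]$. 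The standard lemma of Tao tells us that the slice rank of this diagonal tensor (over any field) is exactly $L$.

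Next I would upper-bound the slice rank of $T$ by expanding its defining polynomial. After expansion each monomial factors as $\prod_{l=1}^k m_l\bigl(x_l^{(j_l)}\bigr)$, where $m_l$ is a monomial in the $n$ coordinate variables of slot $l$, each individual exponent is at most $p-1$, and the total degree satisfies $\sum_{l} \deg(m_l) \le n(p-1)$. Consequently at least one index $l$ satisfies $\deg(m_l) \le n(p-1)/k$, and assigning each monomial to such a slot yields a slice rank decomposition of $T$ of rank at most $k \cdot N$, where $N$ denotes the number of monomials in $n$ variables of individual degree $\le p-1$ and total degree $\le n(p-1)/k$.

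To bound $N$ I would apply the standard weighted-sum trick: for any $0 < \gamma < 1$, since $\gamma^{a - n(p-1)/k} \ge 1$ whenever $a \le n(p-1)/k$,
$$N \le \gamma^{-n(p-1)/k} \sum_{a \ge 0} [z^a](1+z+\cdots+z^{p-1})^n \cdot \gamma^a = \left(\frac{1+\gamma+\cdots+\gamma^{p-1}}{\gamma^{(p-1)/k}}\right)^n,$$
and optimizing over $\gamma$ gives $N \le \Gamma_{p,k}^n$. Combined with the slice rank lower bound, this yields $L \le k\,\Gamma_{p,k}^n$.

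The main (and arguably only) remaining obstacle is removing the spurious factor $k$ to obtain the clean bound $L \le \Gamma_{p,k}^n$ claimed in the theorem. The standard remedy is a tensor-power amplification: form the $r$-fold concatenated system whose index set is $\{1,\ldots,L\}^r$ and whose tuples live in $\mathbb{F}_p^{rn}$ (obtained by concatenating $r$ of the original tuples). The ``zero sum iff all indices agree'' hypothesis is preserved coordinate-by-coordinate, so running the argument above on the new system gives $L^r \le k\,\Gamma_{p,k}^{rn}$; taking $r$-th roots and letting $r \to \infty$ absorbs the constant and delivers $L \le \Gamma_{p,k}^n$.
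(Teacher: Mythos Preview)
Your argument is correct and is precisely the standard slice rank proof of the $k$-coloured sum-free theorem. Note, however, that the paper does not actually give its own proof of this statement: it quotes the theorem as a known result and refers the reader to \cite{lovasz-sauermann} for a proof via the slice rank polynomial method, which is exactly the route you have written out (diagonal tensor, monomial expansion with pigeonhole on the degree, generating-function bound on the low-degree monomial count, and tensor-power amplification to kill the leftover factor $k$). So there is nothing to compare against in the paper beyond confirming that your approach matches the cited one; it does.
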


As an immediate consequence of the $k$-coloured Sum-Free Theorem one can show that every subset $A\su \Fpn$ of size $|A|\ge 4^n>(\Gamma_{p,p})^n$ must contain a solution to the equation $y_1+\dots+y_p=0$ such that $y_1,\dots,y_p\in A$ are not all equal. In other words, we obtain a solution such that every vector in $\Fpn$ appears at most $p-1$ times among $y_1,\dots,y_p$.

The statement of the following proposition is somewhat similar, showing that every large enough subset  $A\su \Fpn$ contains a solution to $y_1+\dots+y_p=0$, such that every vector in $\Fpn$ appears at most $\lambda p$ times among $y_1,\dots,y_p$ (for some fixed $0<\lambda\le 1$).

\begin{proposition}\label{prop-repetitions-solutions}
For every fixed $0<\lambda\le 1$, there exists a constant $C'_\lambda\ge 1$ such that for every prime $p> 1/\lambda$ and every positive integer $n$ the following holds. If $A\su \Fpn$ is a subset of size $|A|> p^2\cdot (C'_\lambda)^n$, there exist vectors $y_1,\dots,y_p\in A$ with $y_1,+\dots+y_p=0$ such that every vector in $\Fpn$ appears among $y_1,\dots,y_p$ at most $\lambda p$ times.
\end{proposition}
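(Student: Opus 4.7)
My plan is to apply the $k$-coloured Sum-Free Theorem (Theorem~\ref{thm-mulitcolor-sum-free}) with $k=p$, exploiting that $\Gamma_{p,p}$ is bounded by a constant independent of $p$: substituting $\gamma = 1/2$ in the defining infimum shows $\Gamma_{p,p}\le 4$, which matches the $p$-independent base $C'_\lambda$ in the target bound. Concretely, I would consider the $L=|A|$ ``trivial'' tuples $(x_1^{(a)},\dots,x_p^{(a)}):=(a,a,\dots,a)$ indexed by $a\in A$. Since $x_1^{(a)}+\cdots+x_p^{(a)} = pa = 0$ in $\Fpn$, the ``$\Leftarrow$'' direction of the iff in Theorem~\ref{thm-mulitcolor-sum-free} is automatic; hence as soon as $|A|>4^n$, the ``$\Rightarrow$'' direction must fail, producing $b_1,\dots,b_p\in A$, not all equal, with $b_1+\cdots+b_p=0$.

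From such a non-trivial sum-zero $p$-tuple one quickly checks that in fact at least three distinct values must occur among $b_1,\dots,b_p$: if only two distinct values $v_1\ne v_2$ appeared with multiplicities $d_1,d_2$ summing to $p$, then $d_1 v_1 + d_2 v_2 = 0$ together with $d_1+d_2\equiv 0\pmod p$ would give $d_1(v_1-v_2)=0$ and so $v_1=v_2$, a contradiction. This already bounds the maximum multiplicity in the tuple by $p-2$, which handles the easy regime $\lambda \ge 1 - 2/p$. For general $\lambda$, I plan to iterate: whenever the current sum-zero $p$-tuple still contains a vector $v$ whose multiplicity exceeds $\lambda p$, I would reapply Theorem~\ref{thm-mulitcolor-sum-free} to the restricted set $A\sm\{v\}$, which still has size $>4^n$ by the hypothesis $|A|>p^2\cdot(C'_\lambda)^n$, obtaining a new sum-zero $p$-tuple that avoids $v$ altogether. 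Maintaining a growing ``forbidden set'' $H\su A$ of all previously offending vectors and re-running the argument inside $A\sm H$, each iteration adds at most $\lfloor 1/\lambda \rfloor$ new vectors to $H$, since a single $p$-tuple cannot contain more than $\lfloor 1/\lambda\rfloor$ distinct vectors each occupying strictly more than $\lambda p$ of the $p$ available positions.

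The main obstacle I anticipate is controlling the number of iterations required, ideally to $O(p)$ steps, so as to match the $p^2$ slack factor in $|A|>p^2\cdot(C'_\lambda)^n$ (one factor of $p$ for the iteration count and another absorbing a union bound over pairs of coincident indices). A priori, the iteration could keep trading one heavy vector for another, eroding $A\sm H$ until it drops below $4^n$ in size, which would be too costly. To rule this out, I expect to need an extra structural input, showing that the collection of vectors of $A$ that can ever be heavy in \emph{some} sum-zero $p$-tuple inside a restricted subset is itself relatively small --- for instance via a potential-function or double-counting argument applying Theorem~\ref{thm-mulitcolor-sum-free} (or a variant of it with non-uniform coefficients $c_i$ summing to $p$, each $\le \lambda p$) to the collection of heavy vectors, or directly to tuples encoding the heavy pattern. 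This termination analysis is where the real work lies; the extraction of a non-trivial sum-zero $p$-tuple with at least three distinct values from the sum-free theorem is only the starting point.
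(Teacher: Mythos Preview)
Your proposal correctly identifies the starting point --- applying Theorem~\ref{thm-mulitcolor-sum-free} with $k=p$ to the trivial tuples $(a,\dots,a)$ to extract a non-trivial zero-sum --- and your observation that any non-trivial zero-sum must involve at least three distinct vectors is a clean fact. However, the heart of your plan, the iteration on the forbidden set $H$, has the gap you yourself flag, and it is a real one: nothing prevents the process from trading one heavy vector for another indefinitely. A single application of the sum-free theorem to $A\setminus H$ gives \emph{some} non-trivial zero-sum there, but carries no information about multiplicities beyond ``at most $p-2$''. There is no potential that visibly decreases, and your suggested fixes (bounding the set of ``ever-heavy'' vectors, or a weighted variant of the sum-free theorem) are not fleshed out; in particular, a vector can be heavy in some zero-sum inside $A\setminus H$ for essentially arbitrary $H$, so the set of potentially heavy vectors can be all of $A$. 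With only the $p^2$ slack in the hypothesis, this approach does not close.

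The paper's argument replaces your ``remove offending vectors'' iteration by an extremal argument on the \emph{number of distinct values} in a solution. One first passes to a subset $A'\subseteq A$ and a value $\ell$ such that (i) there is a large collection of pairwise \emph{disjoint} zero-sums in $A'$ each using exactly $\ell$ distinct vectors, and (ii) no zero-sum in $A'$ uses more than $\ell$ distinct vectors; this costs at most $p\cdot (C'_\lambda)^n$ deletions per value of $\ell$, explaining the $p^2$ factor. Assuming every zero-sum has a vector of multiplicity at least $\lceil\lambda p\rceil$, one reorders each solution so that its first $k-1$ coordinates coincide (where $k=\lceil\lambda p\rceil+1$), and then applies Theorem~\ref{thm-mulitcolor-sum-free} with this $k$ --- not with $k=p$ --- to the compressed $k$-tuples $(y_1,\dots,y_1,\,y_k+\cdots+y_p)$. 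A violating mixed tuple either forces two disjoint solutions to share a vector (impossible), or produces, by splicing, a zero-sum in $A'$ with at least $\ell+1$ distinct values, contradicting the maximality in (ii). The choice $k\approx\lambda p$ is what makes $\Gamma_{p,k}$ bounded by a $p$-independent constant $C'_\lambda$; your use of $k=p$ alone never interacts with the parameter $\lambda$ and cannot, by itself, force multiplicities down to $\lambda p$.
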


\begin{proof}
We define
\[C'_\lambda=\inf_{0<\gamma<1}\frac{1}{(1-\gamma)\cdot \gamma^{1/\lambda}}.\]
Note that then for all primes $p>1/\lambda$ we have $\lceil \lambda p\rceil+1\ge 3$ and
\[\Gamma_{p,\lceil \lambda p\rceil+1}=\inf_{0<\gamma<1}\frac{1+\gamma+\dots+\gamma^{p-1}}{\gamma^{(p-1)/(\lceil \lambda p\rceil+1)}}\le \inf_{0<\gamma<1}\frac{1+\gamma+\dots+\gamma^{p-1}}{\gamma^{1/\lambda}}\le \inf_{0<\gamma<1}\frac{1}{(1-\gamma)\cdot \gamma^{1/\lambda}}=C'_\lambda,\]
and furthermore also
\[\Gamma_{p,p}=\inf_{0<\gamma<1}\frac{1+\gamma+\dots+\gamma^{p-1}}{\gamma^{(p-1)/p}}\le \inf_{0<\gamma<1}\frac{1}{(1-\gamma)\cdot \gamma}\le \inf_{0<\gamma<1}\frac{1}{(1-\gamma)\cdot \gamma^{1/\lambda}}=C'_\lambda,\]

Now, as in the statement of the proposition, let $A\su \Fpn$ be a subset of size $|A|\ge p^2\cdot (C'_\lambda)^n$. Let us suppose for contradiction that for any solution to the equation $y_1+\dots+y_p=0$ with $y_1,\dots,y_p\in A$, there is a vector appearing among $y_1,\dots,y_p$ at least $\lceil \lambda p\rceil$ times.

For every solution $(y_1,\dots,y_p)\in A^p$ to the equation $y_1+\dots+y_p=0$ let us consider the number $|\{y_1,\dots,y_p\}|$, i.e.\ the number of distinct vectors appearing among $y_1,\dots,y_p$ (this number is in $\{1,\dots,p\}$). Let us say that two solutions $(y_1,\dots,y_p),(y'_1,\dots,y'_p)\in A^p$ to this equation are disjoint if no vector appears in both of them.

\begin{claim}\label{claim-set-A-prime}
There exists a number $\ell\in \{1,\dots,p\}$ and a subset $A'\su A$ satisfying the following two conditions:
\begin{itemize}
\item[(i)] There is a collection of more than $(C'_\lambda)^n$ pairwise disjoint solutions $(y_1,\dots,y_p)\in (A')^p$ to the equation $y_1+\dots+y_p=0$ with $|\{y_1,\dots,y_p\}|=\ell$.
\item[(ii)] Every solution $(y_1,\dots,y_p)\in (A')^p$ to the equation $y_1+\dots+y_p=0$ satisfies $|\{y_1,\dots,y_p\}|\le \ell$.
\end{itemize}
\end{claim}
\begin{proof}
Let us  define a sequence of subsets $A'_p\supseteq A'_{p-1}\supseteq\dots \supseteq A'_\ell$ of $A\su \Fpn$ for some $\ell\in \{0,\dots,p\}$ with the following recursive process. Throughout this process we will ensure that for every $j=\ell,\dots, p$, every solution $(y_1,\dots,y_p)\in (A'_j)^p$ to the equation $y_1+\dots+y_p=0$ satisfies $|\{y_1,\dots,y_p\}|\le j$.

We start by defining $A'_p=A'$. Clearly every solution $(y_1,\dots,y_p)\in (A'_p)^p$ to the equation $y_1+\dots+y_p=0$ satisfies $|\{y_1,\dots,y_p\}|\le p$.

Suppose that for some index $1\le j\le p$, we have already defined the set $A'_j\su A\su \Fpn$ with the property that every solution $(y_1,\dots,y_p)\in (A'_j)^p$ to the equation $y_1+\dots+y_p=0$ satisfies $|\{y_1,\dots,y_p\}|\le j$.

Let us now consider a maximal collection of pairwise disjoint solutions $(y_1,\dots,y_p)\in (A'_j)^p$ to the equation $y_1+\dots+y_p=0$ with $|\{y_1,\dots,y_p\}|=j$. If this maximal collection has size larger than $(C'_\lambda)^n$, then let us terminate the process and define $\ell=j$ (then we do not need to define another set $A'_{j-1}$). Otherwise, this maximal collection has size at most $(C'_\lambda)^n$ and so there are at most $p\cdot (C'_\lambda)^n$ different vectors appearing in one of the solutions $(y_1,\dots,y_p)$ in our collection in $(A'_j)^p$. Now, let the set $A'_{j-1}$ be obtained from $A'_j$ by deleting all the vectors appearing in some solution in the collection. Note that then, by the maximality of the chosen collection, no solutions $(y_1,\dots,y_p)$ to $y_1+\dots+y_p=0$ with $|\{y_1,\dots,y_p\}|=j$ remain. Hence in the set $A'_{j-1}$ every solution $(y_1,\dots,y_p)\in (A'_{j_1})^p$ to the equation $y_1+\dots+y_p=0$ satisfies $|\{y_1,\dots,y_p\}|\le j-1$.

This process defines subsets $A'_p\supseteq A'_{p-1}\supseteq\dots \supseteq A'_\ell$ of $A\su \Fpn$ for some $\ell\in \{0,\dots,p\}$. Note that at every step of the process we delete at most $p\cdot (C'_\lambda)^n$ vectors, meaning that $|A'_{j-1}|\ge |A'_j|-p\cdot (C'_\lambda)^n$  for $\ell+1\le j\le p$. This implies that
\[|A'_\ell|\ge  |A'_p|-(\ell-p)\cdot p\cdot (C'_\lambda)^n\ge |A|-p^2\cdot (C'_\lambda)^n>0,\]
so the final set $A'_\ell$ is non-empty.

We claim that $\ell\ne 0$. Indeed, if we had $\ell=0$, then $A'_0$ would be a non-empty subset of $\Fpn$ such that every solution $(y_1,\dots,y_p)\in (A'_{j_1})^p$ to the equation $y_1+\dots+y_p=0$ satisfies $|\{y_1,\dots,y_p\}|\le 0$. This is a contradiction, since for any $y\in A'_0$ we can form a solution solution $(y_1,\dots,y_p)\in (A'_0)^p$ to $y_1+\dots+y_p=0$ by taking $(y_1,\dots,y_p)=(y,\dots,y)$ and then we have $|\{y_1,\dots,y_p\}|=1$. Thus, we must have $\ell\in \{1,\dots,p\}$.

This means that the process above terminated with the set $A'_\ell$, which means that $A'_\ell$ contains a collection of more than $(C'_\lambda)^n$ pairwise disjoint solutions $(y_1,\dots,y_p)\in (A'_\ell)^p$ to the equation $y_1+\dots+y_p=0$ with $|\{y_1,\dots,y_p\}|=\ell$. Thus, taking $A'=A'_\ell$, condition (i) is satisfied. Furthermore, condition (ii) is satisfied, since throughout the process we maintained the property that every solution $(y_1,\dots,y_p)\in (A'_\ell)^p$ to the equation $y_1+\dots+y_p=0$ satisfies $|\{y_1,\dots,y_p\}|\le \ell$.
\end{proof}

As in Claim \ref{claim-set-A-prime}, let us choose $\ell\in \{1,\dots,p\}$ and $A'\su A$ satisfying conditions (i) and (ii). By condition (i), there exists a collection $\mathcal{C}\su (A')^p$ of $|\mathcal{C}|>(C'_\lambda)^n$ pairwise disjoint solutions $(y_1,\dots,y_p)\in (A')^p$ to the equation $y_1+\dots+y_p=0$ with $|\{y_1,\dots,y_p\}|=\ell$. By our assumption, for each of these solutions there is a vector appearing among $y_1,\dots,y_p$ at least $\lceil \lambda p\rceil$ times. So let us define $k=\lceil \lambda p\rceil+1$ (then $k-1=\lceil \lambda p\rceil$) and let us re-order the vectors in each solution $(y_1,\dots,y_p)\in \mathcal{C}$ in our collection in such a way that $y_1=\dots=y_{k-1}$.

Let $L=|\mathcal{C}|> (C'_\lambda)^n$, and let $(y_1^{(j)},\dots,y_p^{(j)})$ for $j=1,\dots,L$ be the $p$-tuples in $\mathcal{C}\su (A')^p$. Then for every $j=1,\dots,L$ we have $y_1^{(j)}+\dots+y_p^{(j)}=0$ and $|\{y_1^{(j)},\dots,y_p^{(j)}\}|=\ell$ as well as $y^{(j)}_1=\dots=y^{(j)}_{k-1}$. This implies that $|\{y_k^{(j)},\dots,y_p^{(j)}\}\setminus \{y_1^{(j)}\}|\ge \ell-1$. Furthermore, for distinct $j,j'\in \{1,\dots,L\}$ the $p$-tuples $(y_1^{(j)},\dots,y_p^{(j)})$ and $(y_1^{(j')},\dots,y_p^{(j')})$ are disjoint, and so we can conclude that
\begin{equation}\label{eq-distinct-variables-slice-rank-argument}
|\{y_k^{(j)},\dots,y_p^{(j)}\}\setminus \{y_1^{(1)},\dots,y_L^{(1)}\}|\ge \ell-1
\end{equation}
for $j=1,\dots,L$.

Suppose we have $\lceil \lambda p\rceil=p$. Then by our assumption for any solution to the equation $y_1+\dots+y_p=0$ with $y_1,\dots,y_p\in A$, there is a vector appearing among $y_1,\dots,y_p$ at least $\lceil \lambda p\rceil=p$ times. In other words,  for any solution to the equation $y_1+\dots+y_p=0$ with $y_1,\dots,y_p\in A$ we must have $y_1=\dots=y_p$. This implies that there cannot be any solution of the form $(y_1^{(j_1)},\dots,y_p^{(j_p)})$ with $y_1^{(j_1)}+\dots +y_p^{(j_p)}=0$ where $j_1,\dots,j_p\in \{1,\dots,L\}$ are not all equal (indeed, if $j_i\neq j_{i'}$, then $y_i^{(j_i)}\ne y_{i'}^{(j_{i'})}$ as $(y_1^{(j_i)},\dots,y_p^{(j_i)})$ and $(y_1^{(j_{i'})},\dots,y_p^{(j_{i'})})$ disjoint). Hence the $p$-tuples $(y_1^{(j)},\dots,y_p^{(j)})$ for $j=1,\dots,L$ satisfy the assumptions of Theorem \ref{thm-mulitcolor-sum-free} for $k=p$. On the other hand, we have $L>(C'_\lambda)^n\ge (\Gamma_{p,p})^n$ which is a contradiction to the conclusion of Theorem \ref{thm-mulitcolor-sum-free}.

So we may now assume that $2\le \lceil \lambda p\rceil\le p-1$, meaning that  $2\le k-1\le p-1$. For $j=1,\dots,L$, let us define a $k$-tuple $(x_1^{(j)},\dots,x_k^{(j)})\in \Fpn\times\dots\times \Fpn$ by setting $x_i^{(j)}=y_i^{(j)}=y_1^{(j)}$ for $i=1,\dots,k-1$ and
\[x_k^{(j)}=y_k^{(j)}+\dots+y_p^{(j)}=-(y_1^{(j)}+\dots+y_{k-1}^{(j)})=-(k-1)\cdot y_1^{(j)}.\]
Note that then we have $x_1^{(j)}+\dots+x_k^{(j)}=y_1^{(j)}+\dots+y_{k-1}^{(j)}+y_k^{(j)}+\dots+y_p^{(j)}=0$ for $j=1,\dots,L$. Since $L>(C'_\lambda)^n\ge (\Gamma_{p,k})^n$, from Theorem \ref{thm-mulitcolor-sum-free} we can conclude that there must exist $j_1,\dots j_k\in \{1,\dots,L\}$ with $x_1^{(j_1)}+\dots +x_k^{(j_k)}=0$ such that $j_1,\dots,j_k$ are not all equal.

Suppose we have $j_1=\dots=j_{k-1}$, then let $j=j_1=\dots=j_{k-1}$ and observe that $j_k\ne j$ (since $j_1,\dots,j_k$ are not all equal). But now we have
\[0=x_1^{(j_1)}+\dots +x_{k-1}^{(j_{k-1})}+x_k^{(j_k)}=x_1^{(j)}+\dots +x_{k-1}^{(j)}-(k-1)\cdot y_1^{(j_k)}=(k-1)\cdot y_1^{(j)}-(k-1)\cdot y_1^{(j_k)}\]
and since $k-1\ne 0$ in $\mathbb{F}_p$ this implies that $y_1^{(j)}=y_1^{(j_k)}$. But this is a contradiction since the $p$-tuples $(y_1^{(j)},\dots,y_p^{(j)})$ and $(y_1^{(j_k)},\dots,y_p^{(j_k)})$ are disjoint.

So let us now assume that the indices $j_1,\dots,j_{k-1}$ are not all equal. Then the set
\[\{x_1^{(j_1)},\dots ,x_{k-1}^{(j_{k-1})}\}=\{y_1^{(j_1)},\dots ,y_{k-1}^{(j_{k-1})}\}=\{y_1^{(j_1)},\dots ,y_1^{(j_{k-1})}\}\]
has size at least $2$ (since the $p$-tuples $(x_1^{(j)},\dots,x_p^{(j)})$ for $j=1,\dots,L$ are pairwise disjoint). Now, we have
\[0=x_1^{(j_1)}+\dots +x_{k-1}^{(j_{k-1})}+x_k^{(j_k)}=y_1^{(j_1)}+\dots +y_{k-1}^{(j_{k-1})}+y_k^{(j)}+\dots+y_p^{(j)}=y_1^{(j_1)}+\dots +y_{1}^{(j_{k-1})}+y_k^{(j)}+\dots+y_p^{(j)},\]
So $(y_1^{(j_1)},\dots ,y_{1}^{(j_{k-1})},y_k^{(j)},\dots,y_p^{(j)})\in (A')^p$ satisfies $y_1^{(j_1)}+\dots +y_{1}^{(j_{k-1})}+y_k^{(j)}+\dots+y_p^{(j)}=0$ and
\[|\{y_1^{(j_1)},\dots ,y_{1}^{(j_{k-1})},y_k^{(j)},\dots,y_p^{(j)}\}|\ge |\{y_1^{(j_1)},\dots ,y_{1}^{(j_{k-1})}\}|+ |\{y_k^{(j)},\dots,y_p^{(j)}\}\setminus \{y_1^{(1)},\dots,y_L^{(1)}\}|\ge 2+(\ell-1)=\ell+1,\]
where for the second inequality we used (\ref{eq-distinct-variables-slice-rank-argument}). But this is a contradiction to condition (ii) in our choice of $\ell$ and $A'$ as in Claim \ref{claim-set-A-prime}. This finishes the proof of Proposition \ref{prop-repetitions-solutions}.\end{proof}

\subsection{Partitioning into rainbow sets}

This subsection proves the following combinatorial lemma about partitioning a set with coloured elements into rainbow subsets. This, together with the results from the last subsection, allows us to split our solution $(x_1,\dots,x_p)$ of $x_1+\dots+x_p$ into $\ell$-tuples, each consisting of $\ell$ distinct vectors, in the desired way.

\begin{lemma}\label{lem-rainbow-sets}
Let $1\le \ell\le k$ be integers, and consider a colouring of a set $S$ of size $|S|=k$ (assigning each of the elements in $S$ a colour). Suppose that each colour occurs at most $k/\ell$ times. Then there is a partition $S=S_1\cup \dots\cup S_{\lfloor k/\ell\rfloor}\cup S_{\lfloor k/\ell\rfloor+1}$ with $|S_j|=\ell$ for $j=1,\dots,\lfloor k/\ell\rfloor$ and $|S_{\lfloor k/\ell\rfloor+1}|=k-\lfloor k/\ell\rfloor\cdot \ell$ such that for each $j=1,\dots,\lfloor k/\ell\rfloor+1$ all elements of $S_j$ have distinct colours.
\end{lemma}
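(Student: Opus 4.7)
My plan is to prove the lemma by induction on $k$, writing $k=q\ell+r$ with $q=\lfloor k/\ell\rfloor$ and $0\le r<\ell$. Since colour multiplicities are integers bounded above by $k/\ell$, every colour occurs at most $q$ times. For the base case I take $k<2\ell$, i.e.\ $q=1$: then each colour occurs at most once, so $S$ consists of $k$ distinct colours and any partition into sets of sizes $\ell$ and $r$ is automatically rainbow.

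For the inductive step $k\ge 2\ell$, let $M$ be the maximum colour multiplicity in $S$ and let $t$ be the number of colours attaining it. Note that the number of distinct colours in $S$ is at least $k/M\ge \ell$, so rainbow $\ell$-subsets always exist. In the main case, either $M\le q-1$, or $M=q$ with $t\le \ell$; in either situation I can pick a rainbow $\ell$-subset $S_1$ that contains one representative of every colour of multiplicity $M$ (topping up with $\ell-t$ further distinct colours if needed). Then $S\setminus S_1$ has size $(q-1)\ell+r$ and maximum colour multiplicity at most $q-1=\lfloor(k-\ell)/\ell\rfloor$, so the induction hypothesis delivers the rest of the partition.

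The main obstacle is the exceptional case where $M=q$ and $t>\ell$. The inequality $tq\le k=q\ell+r$ then forces $r\ge (t-\ell)q\ge q$, hence $q\le r<\ell$; in particular the $t>\ell$ heavy colours cannot all fit inside a single rainbow $\ell$-block, so no choice of a first rainbow $\ell$-set can reduce the maximum multiplicity to $q-1$, and the naive induction breaks. The fix is to peel off the leftover bin first: set $S_{\lfloor k/\ell\rfloor+1}$ to be an arbitrary rainbow $r$-subset of $S$ (possible since there are at least $\ell>r$ distinct colours). Then $S\setminus S_{\lfloor k/\ell\rfloor+1}$ has size $q\ell$ and maximum colour multiplicity still at most $q=(k-r)/\ell$, so applying the induction hypothesis yields a partition of it into $q$ rainbow $\ell$-subsets, which together with $S_{\lfloor k/\ell\rfloor+1}$ gives the required decomposition of $S$.
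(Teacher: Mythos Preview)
Your inductive argument is correct and complete, though it follows a different route from the paper's proof. One minor slip in exposition: in the main case when $M\le q-1$, the number $t$ of colours of multiplicity $M$ may exceed $\ell$, so you cannot literally ``pick a rainbow $\ell$-subset $S_1$ that contains one representative of every colour of multiplicity $M$''. This is harmless, however: when $M\le q-1$, \emph{any} rainbow $\ell$-subset $S_1$ leaves maximum multiplicity at most $M\le q-1$ in $S\setminus S_1$, so the induction still applies. Your handling of the exceptional case $M=q$, $t>\ell$ (peeling off the size-$r$ leftover first, reducing to $k'=q\ell$ with $r'=0$, whereupon $t'q\le q\ell$ forces $t'\le\ell$ and the exceptional case cannot recur) is clean and the induction is well-founded since $r\ge q>0$ guarantees $k'<k$.

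The paper's proof is non-inductive and more direct: it labels the elements $s_1,\dots,s_k$ so that each colour class occupies a block of consecutive indices, and then explicitly partitions $\{1,\dots,k\}$ into sets $I_1,\dots,I_{m+1}$ (with $m=\lfloor k/\ell\rfloor$) so that any two indices in the same part differ by at least $m$; since every colour block has length at most $m$, each $S_j=\{s_i:i\in I_j\}$ is automatically rainbow. The paper's construction is slicker and produces the partition in one stroke without case analysis, whereas your approach is the natural greedy discovery and has the virtue of making the one genuine obstruction (too many heavy colours to kill with a single rainbow $\ell$-block) and its resolution explicitly visible.
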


\begin{proof}
    Let $m=\lfloor k/\ell\rfloor$, and $r=k-m\ell\in \{0,\dots,\ell-1\}$. Let us label the elements of the set $S$ as $s_1, \ldots, s_k$ in such a way that each colour class forms a block of consecutive elements. It now suffices to show that we can find a partition $\{1,\dots,k\}=I_1\cup \dots\cup I_{m}\cup I_{m+1}$ with $|I_j|=\ell$ for $j=1,\dots,m$ and $|S_{m+1}|=r$ such that for each $j=1,\dots,m+1$ we have $|x-y|\ge m$ for any two distinct elements $x,y\in S_j$. Indeed, then we can define $S_j=\{s_x\mid x\in I_j\}$ for $j=1,\dots,m+1$. Note that then for any $j=1,\dots,m+1$ and any distinct $x,y\in I_j$, the elements $s_x$ and $s_y$ cannot have the same colour, since otherwise all elements between $s_x$ and $s_y$ in the list $s_1, \ldots, s_k$ would also be of that colour, and so the colour would appear at least $m+1>k/\ell$ times since $|x-y|\ge m$.

    To define the desired partition $\{1,\dots,k\}=\{1,\dots,m\ell+r\}=I_1\cup \dots\cup I_{m}\cup I_{m+1}$, let $I_{m+1}=\{m+1,2m+2,\dots,rm+r\}$ and $I_j=\{x\in \{1,\dots,k\}\mid x\equiv j \pmod{m}\}\setminus I_{m+1}$ for $j=1,\dots,m$. It is not hard to see that $|I_j|=\ell$ for $j=1,\dots,m$ and $|S_{m+1}|=r$. Furthermore, for any two distinct $x,y\in S_{m+1}$, we have $|x-y|\ge m+1> m$, since $x$ and $y$ are both multiples of $m+1$. For any $j=1,\dots,m$ and any two distinct $x,y\in S_{j}$, we have $|x-y|\ge m$, since $x\equiv j\equiv y \pmod{m}$.
\end{proof}

\section{Proof of Theorem \ref{thm-technical}}
\label{sect-proof-technical-thm}

In this section, we finally prove Theorem \ref{thm-technical}. Note that if the theorem holds for some $c>1$, then it also holds for all smaller values of $c$. Hence it suffices to prove the theorem for $c=h/2$ for all integers $h\ge 3$.

We prove Theorem \ref{thm-technical} for $c=h/2$ for $h=3,4,\dots$ by induction on $h$. The first subsection of this section contains the induction beginning $h=3$, and the second subsection contains the induction step.

\subsection{Induction beginning \texorpdfstring{$\boldsymbol{c=3/2}$}{c=3/2}}

As the starting point of our induction, let us prove Theorem \ref{thm-technical} for $c=3/2$. The following lemma shows that the desired statement holds for $\ell=2$ and $C(c)=(C'_{1/2})^2$ (with $C'_{1/2}$ as in Proposition \ref{prop-repetitions-solutions}) and $D(p,3/2)=9p^6$ for any prime $p\ge 3$.

\begin{lemma}
Let $C'_{1/2}\ge 1$ be as in Proposition \ref{prop-repetitions-solutions} for $\lambda=1/2$. Let $p\ge 3$ be a prime and let $n$ be a positive integer. Suppose that $A\su \Fpn$ is a subset of $\Fpn$ such that $|A|\ge 9p^6\cdot (C'_{1/2})^{2n}$ and  $|A+A|\le |A|^{3/2}$. Then $A$ contains $p$ distinct vectors $x_1,\dots,x_p\in A$ with $x_1+\dots+x_p=0$.
\end{lemma}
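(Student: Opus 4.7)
The plan is to follow the strategy for the case $c=3/2$, $\ell=2$ laid out in the proof overview. Call a vector $w\in\Fpn$ \emph{bad} if the family of $2$-subsets $\{u,v\}\su A$ with $u\neq v$ and $u+v=w$ has cover number at most $p$, and call such a $2$-subset itself bad when its sum $u+v$ is bad. I will first pass to a subset $A''\su A$ containing no bad $2$-subset but still large enough to feed into Proposition~\ref{prop-repetitions-solutions}; then use Proposition~\ref{prop-repetitions-solutions} together with Lemma~\ref{lem-rainbow-sets} to produce a solution $y_1+\dots+y_p=0$ in $A''$ that splits into $(p-1)/2$ pairs of distinct elements plus one singleton; and finally modify each pair in turn to obtain $p$ pairwise distinct vectors summing to zero.

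To construct $A''$, I count bad $2$-subsets in $A$. For each bad $w$, the cover of size at most $p$ intersects every $2$-subset summing to $w$, so there are at most $p$ such bad $2$-subsets (one per cover element); since bad $w$ lie in $A+A$, in total $A$ contains at most $p\cdot|A+A|\le p|A|^{3/2}$ bad $2$-subsets. Viewing these as edges of a graph $G$ on vertex set $A$, the average degree of $G$ is at most $2p|A|^{1/2}$, so by the Caro--Wei inequality
\[
\alpha(G)\;\ge\;\frac{|A|}{2p|A|^{1/2}+1}.
\]
Using $|A|\ge 9p^6(C'_{1/2})^{2n}$, hence $|A|^{1/2}\ge 3p^3(C'_{1/2})^n$, an elementary quadratic calculation shows $\alpha(G)>p^2(C'_{1/2})^n$. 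I take $A''$ to be such an independent set; by construction $A''$ contains no bad $2$-subset.

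Proposition~\ref{prop-repetitions-solutions} applied to $A''$ with $\lambda=1/2$ (legitimate because $p\ge 3>2$) then gives $y_1,\dots,y_p\in A''$ with $y_1+\dots+y_p=0$ such that each vector in $\Fpn$ appears at most $p/2$ times among the $y_i$. Treating equal vectors as equal colours and using that $p$ is odd, Lemma~\ref{lem-rainbow-sets} (applied with $k=p$, $\ell=2$, $k/\ell=p/2$) partitions the labelled $p$-tuple $(y_1,\dots,y_p)$ into $(p-1)/2$ pairs of distinct vectors together with one singleton. By the defining property of $A''$, every such pair is non-bad.

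Finally I process the pairs $P_1,\dots,P_{(p-1)/2}$ one by one. At step $i$, the current solution has exactly $p$ entries---the already-replaced $P_1',\dots,P_{i-1}'$, the untouched $P_i,\dots,P_{(p-1)/2}$, and the singleton---so the $p-2$ vectors other than the two in $P_i$ are already fixed. Since $P_i$ is non-bad, the family of $2$-subsets of $A$ summing to $w_i:=u+v$ (where $P_i=\{u,v\}$) has cover number exceeding $p$, hence cannot be covered by any $p-2<p$ vectors; so I can pick a new $2$-subset $P_i'\su A$ of distinct elements with $u'+v'=w_i$ that avoids all the other $p-2$ active vectors. Replacing $P_i$ by $P_i'$ preserves the total sum, and by induction the chosen $P_i'$ is disjoint from every $P_j'$ ($j<i$), every $P_j$ ($j>i$), and the singleton; after all $(p-1)/2$ steps the final vectors $x_1,\dots,x_p\in A$ are pairwise distinct and still sum to zero. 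The step I expect to be the main obstacle is the construction of $A''$: producing the threshold $|A''|>p^2(C'_{1/2})^n$ from the hypothesis $|A|\ge 9p^6(C'_{1/2})^{2n}$ requires combining the precise cover bound of $p$ per bad sum with $|A+A|\le|A|^{3/2}$ and using a sufficiently sharp independent-set inequality, since a looser probabilistic deletion argument would only yield a constant of order $16p^6$.
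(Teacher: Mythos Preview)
Your proposal is correct and follows essentially the same route as the paper's proof: count bad pairs via $|A+A|\le |A|^{3/2}$, extract a large independent set $A''$ by Caro--Wei, apply Proposition~\ref{prop-repetitions-solutions} with $\lambda=1/2$ and Lemma~\ref{lem-rainbow-sets} to split into pairs plus a singleton, then greedily replace pairs. The only cosmetic difference is that you phrase ``bad'' via cover number (as in the general induction step) rather than directly as ``at most $p$ representations''; for $\ell=2$ these coincide because pairs with a fixed sum are pairwise disjoint, which is exactly your ``one per cover element'' observation.
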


\begin{proof}
Let us say that a pair $\{x,y\}\su A$ with $x\ne y$ is \emph{bad} if there are at most $p$ pairs $\{x',y'\}\su A$ with $x'\ne y'$ satisfying $x'+y'=x+y$. In other words, a pair $\{x,y\}\su A$ is bad if there are at most $p$ different ways to write $x+y$ as a sum of two distinct elements of $A$. Note that every element in $A+A$ can occur as the sum of at most $p$ bad pairs (since otherwise the pairs with this sum would not be bad). Thus, there can be at most $p\cdot |A+A|\le p\cdot |A|^{3/2}$ bad pairs $\{x,y\}\su A$.

Let us consider the graph with vertex set $A$ where for distinct $x,y\in A$ we draw an edge between $x$ and $y$ if and only if $\{x,y\}$ is a bad pair. Then this graph has at most $p\cdot |A|^{3/2}$ edges, and hence has average degree at most $2p\cdot |A|^{1/2}$. Thus, by the well-known Caro--Wei bound \cite{caro,wei}, the graph has an independent set of size at least
\[\frac{|A|}{2p\cdot |A|^{1/2}+1}> \frac{|A|}{3p\cdot |A|^{1/2}}=\frac{|A|^{1/2}}{3p}\ge \frac{3p^3\cdot (C'_{1/2})^{n}}{3p}=p^2\cdot (C'_{1/2})^{n}\]
So let $A'\su A\su \Fpn$ be a subset of size $|A'|>p^2\cdot (C'_{1/2})^{n}$ such that there does not exist a bad pair $\{x,y\}\su A$ with $x,y\in A'$.

By Proposition \ref{prop-repetitions-solutions} for $\lambda=1/2$, there exist vectors $y_1,\dots,y_p\in A'$ with $y_1+\dots+y_p=0$ such that every vector in $\Fpn$ appears among $y_1,\dots,y_p$ at most $p/2$ times.

Let us now consider a colouring of the set $\{1,\dots,p\}$ where the colours correspond to the different vectors appearing among $y_1,\dots,y_p$. In other words, two indices $i,j\in\{1,\dots,p\}$ receive the same colour if and only if $y_i=y_j$. Then every colour appears at most $p/2$ times on the set $\{1,\dots,p\}$, and so by Lemma \ref{lem-rainbow-sets}, there exists a partition of $\{1,\dots,p\}$ into sets $\{i_s,j_s\}$ of size $|\{i(s),j(s)\}|=2$ for $s=1,\dots,(p-1)/2$ and one set $\{t\}$ of size $1$ such that all of the sets in this partition are rainbow.

In other words, we can split the list of vectors $y_1,\dots,y_p\in A'$ into pairs $\{y_{i(s)},y_{j(s)}\}$ with $y_{i(s)}\neq y_{j(s)}$ for $s=1,\dots,(p-1)/2$ and one remaining vector $y_t$. Recalling that $A'$ does not contain any bad pair, we observe that the pairs $\{y_{i(s)},y_{j(s)}\}\su A'$ for $s=1,\dots,(p-1)/2$ are not bad.

Let $x_t=y_t$. We will now replace each pair $\{y_{i(s)},y_{j(s)}\}\su A'$ by a pair $\{x_{i(s)},x_{j(s)}\}\su A$ with the same sum in order to construct a solution $(x_1,\dots,x_p)\in A^k$ to the equation $x_1+\dots+x_p=0$ with distinct vectors $x_1,\dots,x_p$. To do this, consider the indices $s=1,\dots,(p-1)/2$ one by one. For each index $s$, consider the sum $y_{i(s)}+y_{j(s)}\in \Fpn$. Since $\{y_{i(s)},y_{j(s)}\}$ is not bad, there are at least $p$ pairs $\{x',y'\}\su A$ with $x'\ne y'$ and $x'+y'=y_{i(s)}+y_{j(s)}$. These pairs must all be disjoint (since knowing $x'$ and the sum $x'+y'=y_{i(s)}+y_{j(s)}$ already determines $y'$), and so there must be at least one pair $\{x',y'\}\su A$ with $x'\ne y'$ and $x'+y'=y_{i(s)}+y_{j(s)}$ which does not contain $x_t$ or any of the $2s-2\le p-3$ vectors in the already chosen pairs $\{x_{i(1)},x_{j(1)}\}, \dots, \{x_{i(s-1)},x_{j(s-1)}\}$. So let us choose $\{x_{i(s)},x_{j(s)}\}\su A$ to be such a pair. Doing this step by step for $s=1,\dots,(p-1)/2$, we obtain a $p$-tuple $(x_1,\dots,x_p)\in A^p$. By construction, the vectors $x_1,\dots,x_p\in A$ are distinct and we have
\[x_1+\dots+x_p=x_t+\sum_{s=1}^{(p-1)/2} (x_{i(s-1)}+x_{j(s-1)})=y_t+\sum_{s=1}^{(p-1)/2} (y_{i(s-1)}+y_{j(s-1)})=y_1+\dots+y_p=0,\]
as desired.
\end{proof}

\subsection{Induction step}
\label{sec-induction-step}

For the induction step in the proof of Theorem \ref{thm-technical}, we will use the following result of Borenstein and Croot \cite[Theorem 4]{borenstein-croot}, which is a higher-uniformity version of the Balog--Szemer\'{e}di--Gowers Theorem \cite{balog-szemeredi,gowers}.

\begin{theorem}[\cite{borenstein-croot}]\label{thm-borenstein-croot}
For every $0<\eps<1/2$ and $c>1$, there exists $\delta>0$ such that the following holds for all sufficiently large $k$ and all sufficiently large finite subsets $A$ of an additively written abelian group. If $S\su A^k$ is a subset satisfying $|S|\ge |A|^{k-\delta}$ and $|\{y_1+\dots+y_k\mid (y_1,\dots,y_k)\in S\}|\le |A|^c$, then there is a subset $A'\su A$ of size $|A'|\ge |A'|^{1-\eps}$ such that $|\ell' A'|=|A'+\dots+A'|\le |A'|^{c(1+\eps\ell')}$ for all positive integers $\ell'$.
\end{theorem}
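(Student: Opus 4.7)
The plan is to mimic the Balog--Szemer\'{e}di--Gowers (BSG) strategy in the $k$-fold setting: extract a large subset of $A$ with small doubling, then propagate this bound to all $\ell'$-fold sumsets via the Pl\"unnecke--Ruzsa inequality. Write $T=\{y_1+\dots+y_k:(y_1,\dots,y_k)\in S\}$, so $|T|\le |A|^c$ while $|S|\ge |A|^{k-\delta}$. By Cauchy--Schwarz, the number of pairs of $k$-tuples in $S\times S$ with equal sums is at least $|S|^2/|T|\ge |A|^{2k-c-2\delta}$. Each such pair is an identity $y_1+\dots+y_k=y_1'+\dots+y_k'$, i.e.\ a representation of $0$ as a signed sum of $k$ pairs of elements of $A$, and hence a representation of $y_1-y_1'$ as a signed sum of $k-1$ differences from $A-A$.

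Next, I would build a correlation structure on $A$. By averaging over the first coordinates, many pairs $(a,b)\in A\times A$ admit many ``witness'' $(2k-2)$-tuples $(y_2,\dots,y_k,y_2',\dots,y_k')\in A^{2k-2}$ with $(a,y_2,\dots,y_k)$ and $(b,y_2',\dots,y_k')$ both in $S$ and having equal sums; this gives many representations of $a-b$ as a signed sum of $2k-2$ elements of $A$. Via a dependent random choice (or a direct BSG-style extraction) I would then produce $A'\su A$ of size $|A'|\ge |A|^{1-\eps}$ such that for \emph{every} pair $a,b\in A'$ the difference $a-b$ still has many such representations. A standard path-counting argument (essentially Gowers's proof of BSG adapted to $k$-tuples) then converts this uniformity into the doubling bound $|A'+A'|\le |A'|^{1+c\eps}$, provided $\delta$ was chosen small enough in terms of $\eps, c, k$.

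Finally, Pl\"unnecke--Ruzsa applied to $A'$ with doubling constant $K=|A'+A'|/|A'|\le |A'|^{c\eps}$ gives
\[|\ell' A'|\le K^{\ell'}|A'|\le |A'|^{1+c\eps\ell'}\le |A'|^{c(1+\eps\ell')}\]
for every positive integer $\ell'$ (the last inequality uses $c>1$), which is the desired conclusion.

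The main obstacle is the BSG extraction step: one must track the polynomial losses carefully enough that the surviving subset has size $|A|^{1-\eps}$ (almost no loss), while simultaneously the doubling bound is $|A'|^{1+c\eps}$ (very close to ``no doubling''). Achieving both quantitative goals at the same time is precisely what the higher-uniformity refinement of BSG accomplishes, and it requires a more delicate combinatorial-probabilistic argument than the usual BSG theorem, whose parameters only give control up to multiplicative constants depending on the energy and which is insufficient to yield an output subset of size polynomially close to $|A|$.
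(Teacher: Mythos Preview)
This theorem is not proved in the paper; it is quoted verbatim from Borenstein--Croot \cite{borenstein-croot} and used as a black box in the induction step of Theorem~\ref{thm-technical}. So there is no ``paper's own proof'' to compare your proposal against.

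That said, a brief comment on your sketch as an attempted proof of the Borenstein--Croot result itself. The overall architecture (Cauchy--Schwarz for energy, a BSG-type extraction, then Pl\"unnecke--Ruzsa to pass to higher sumsets) is the right one, and your final inequality $|A'|^{1+c\eps\ell'}\le |A'|^{c(1+\eps\ell')}$ is correct since $c>1$. But the sketch does not engage with the one feature that distinguishes this theorem from ordinary BSG: the role of $k$. In standard BSG the extracted set has size $\ge |A|/K^{O(1)}$ where $K$ is the energy ratio; here $K\approx |A|^{c-1+2\delta}$, so that would only give $|A'|\ge |A|^{1-O(c-1)}$, far short of $|A|^{1-\eps}$ for arbitrary small $\eps$. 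The whole point of the Borenstein--Croot argument is that by taking $k$ \emph{large} (in terms of $\eps$ and $c$) one can iterate and amplify the extraction so that the loss in density is driven down to $|A|^{-\eps}$. Your write-up treats $k$ as fixed and says ``provided $\delta$ was chosen small enough in terms of $\eps,c,k$,'' which inverts the dependency: it is $k$ that must be chosen large in terms of $\eps,c$, and then $\delta$ in terms of all three. Without explaining how the higher uniformity buys the near-lossless extraction, the proposal is a plausible outline of ordinary BSG but not of the stated theorem.
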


In order to perform the induction step for the proof of Theorem \ref{thm-technical}, let us now assume that $c=h/2$ for an integer $h\ge 4$ and that we have already proved Theorem \ref{thm-technical} for $c'=(h-1)/2=c-(1/2)$. Note that $c\ge 2$, and define $c''=c-(3/4)>1$.

Let us take a positive integer $\ell'$ as in Theorem \ref{thm-technical} for $c'=c-(1/2)$. Let us now choose $0<\eps<1/2$ small enough (depending on $c$) such that
\begin{equation}\label{eq-choice-eps}
c''\cdot (1+\eps \ell')=\left(c-\frac{3}{4}\right)\cdot(1+\eps \ell')\le c-\frac{1}{2}=c'.
\end{equation}

Let us now apply Theorem \ref{thm-borenstein-croot} to $0<\eps<1/2$ and $c''>1$. We obtain some $\delta>0$ and some positive integer $k$ such that the statement in Theorem \ref{thm-borenstein-croot} holds for all sufficiently large $A$. By decreasing $\delta$ if needed, we may assume that $0<\delta<1/4$. Let $\ell=k+1$.

Recall that we assume that Theorem \ref{thm-technical} holds for $c'$ as our induction hypothesis. So choose $C(c')\ge 1$ as well as $D(c',p)\ge 1$ for every sufficiently large prime $p$ as in Theorem \ref{thm-technical}. Furthermore, let $C'_{1/\ell}\ge 1$ be as in Proposition \ref{prop-repetitions-solutions} for $\lambda=1/\ell$. Now, let us define
\[C(c)=\max\{\ (C(c'))^2\ ,\  (C'_{1/\ell})^{\ell/\delta}\ \},\]
and for every sufficiently large prime $p$ (large enough for Theorem \ref{thm-technical} for $c'$), let us define
\[D(c,p)=\max\{\ (D(c',p))^2\ ,\  p^4\cdot p^{2\ell/\delta}\cdot 2^{(\ell+1)/\delta}\ \}.\]

Now, assuming that $p$ is sufficiently large in terms of $c$, any subset $A\su \Fpn$ of size $|A|\ge D(c,p)\cdot (C(c))^n\ge D(c,p)\ge p$ is large enough for the statement in Theorem \ref{thm-borenstein-croot}.

As in Theorem \ref{thm-technical}, let us now assume that $A\su \Fpn$ is a subset of size $|A|\ge D(c,p)\cdot (C(c))^n$ with $|\ell A|=|A+\dots+A|\le |A|^c$. We need to show that $A$ contains $p$ distinct vectors $x_1,\dots,x_p\in A$ with $x_1+\dots+x_p=0$.

Let us say that a vector $w\in \ell A\su \Fpn$ is \emph{bad} if the family of all  subsets $\{x_1,\dots,x_\ell\}\su A$ with distinct elements $x_1,\dots,x_\ell\in A$ satisfying $x_1+\dots+x_\ell=w$ has a cover $Z_w\su A$ of size $|Z_w|\le p$. Let us say that a subset $\{x_1,\dots,x_\ell\}\su A$ with distinct elements $x_1,\dots,x_\ell\in A$ is bad if the sum $x_1+\dots+x_\ell$ is bad.

Now, for every bad subset $\{x_1,\dots,x_\ell\}\su A$, one of the elements of $\{x_1,\dots,x_\ell\}$ must be in $Z_w$ for $w=x_1+\dots+x_\ell$ (indeed, $Z_w$ is a cover of all the size-$\ell$ subsets of $A$ summing to $w$). Thus, by suitably ordering, we can turn every bad subset $\{x_1,\dots,x_\ell\}$ into an $\ell$-tuple $(y_1,\dots,y_{\ell-1},z)$ such that  $w=y_1+\dots+y_{\ell-1}+z$ is bad and $z\in Z_{w}$. Hence the number of bad subsets $\{x_1,\dots,x_\ell\}\su A$ is at most the number of $\ell$-tuples $(y_1,\dots,y_{\ell-1},z)\in A^\ell$ such that  $w=y_1+\dots+y_{\ell-1}+z$ is bad and $z\in Z_{w}$.

For every $y\in \Fpn$, let us now define $N_y$ to be the number of $(\ell-1)$-tuples $(y_1,\dots,y_{\ell-1})\in A^{\ell-1}$ with $y_1+\dots+y_{\ell-1}=y$. Note that we have $N_y=0$ for all $y\not\in (\ell-1)A$.

For every $y\in \Fpn$, let us furthermore define $M_y$ to be the number of vectors $z\in A$ such that $y+z$ is bad and $z\in Z_{y+z}$. Note that we clearly have $M_y\le |A|$ for all $y\in \Fpn$.

Now, the number of $\ell$-tuples $(y_1,\dots,y_{\ell-1},z)$ such that  $w=y_1+\dots+y_{\ell-1}+z$ is bad and $z\in Z_{w}$ is precisely $\sum_{y\in \Fpn} M_yN_y$. Indeed, for every possible value of $y=y_1+\dots+y_{\ell-1}$ there are $M_y$ possibilities to choose $z\in A$ such that $w=y+z=y_1+\dots+y_{\ell-1}+z$ is bad and $z\in Z_w$, and there are furthermore $N_y$ possibilities to choose $y_1,\dots,y_{\ell-1}\in A$ with $y_1+\dots+y_{\ell-1}=y$.

Thus, the number of bad subsets $\{x_1,\dots,x_\ell\}\su A$ is at most $\sum_{y\in \Fpn} M_yN_y$.

Now, observe that
\begin{equation}\label{eq-bound-sum-N}
\sum_{y\in \Fpn} N_y= |A|^{\ell-1}.
\end{equation}
Indeed, $\sum_{y\in \Fpn} N_y$ is the total number of $(\ell-1)$-tuples $(y_1,\dots,y_{\ell-1})\in A^{\ell-1}$.

Next, we claim that $\sum_{y\in \Fpn} M_y\le p\cdot |\ell A|$. Indeed, $\sum_{y\in (\ell-1)A} M_y$ is the number of pairs $(y,z)\in \Fpn\times A$ such that $y+z$ is bad and $z\in Z_{y+z}$. We can choose such pairs by first choosing a bad $w=y+z$, then choosing $z\in Z_w$, and finally calculating $y=w-z$. Note that there are at most $|\ell A|$ choices for a bad $w$ (since by definition every bad $w$ is an element of the set $\ell A\su \Fpn$), and for every such choice of $w$ there are only $|Z_w|\le p$ choices for $z\in Z_w$. Hence the number of pairs $(y,z)\in \Fpn\times A$ such that $y+z$ is bad and $z\in Z_{y+z}$ is indeed at most $|\ell A|\cdot p$, and we indeed have $\sum_{y\in \Fpn} M_y\le p\cdot |\ell A|$.

Recalling our assumption $|\ell A|\le |A|^c$, we can now conclude that $\sum_{y\in \Fpn} M_y\le p\cdot |\ell A| \le p\cdot |A|^{c}$.
Let us now define
\[Y=\left\{y\in \Fpn \ \middle\vert\  M_y\ge \frac{|A|}{2^{\ell+1}\cdot p^{2\ell}\cdot (C'_{1/\ell})^{\ell n}}\right\}\]
Note that then we have
\[|Y|\le \frac{\sum_{y\in \Fpn} M_y}{|A|/(2^{\ell+1}\cdot p^{2\ell}\cdot (C'_{1/\ell})^{\ell n})}\le 2^{\ell+1}\cdot p^{2\ell}\cdot (C'_{1/\ell})^{\ell n}\cdot \frac{p\cdot |A|^c}{|A|}=p^{2\ell+1}\cdot (C'_{1/\ell})^{\ell n}\cdot |A|^{c-1}\le |A|^{c-(3/4)}.\]
Here, we used in the last inequality that $|A|\ge D(c,p)\cdot (C(c))^n\ge p^4\cdot p^{2\ell/\delta}\cdot (C'_{1/\ell})^{\ell n/\delta}\ge p^{8\ell+4}\cdot (C'_{1/\ell})^{4\ell n}$ (as $0<\delta<1/4$).

We will now distinguish two cases depending on the size of the sum $\sum_{y\in Y} N_y$.

\textbf{Case 1: $\boldsymbol{\sum_{y\in Y} N_y\ge |A|^{\ell-1-\delta}}$.} Recall that $k=\ell-1$ and that Theorem \ref{thm-borenstein-croot} holds with $c''=c-(3/4)$ and our chosen $0<\eps<1/2$ with our values for $\delta$ and $k$. Also recall that $A\su \Fpn$ is sufficiently large for Theorem \ref{thm-borenstein-croot} with these parameters.

Let us now define $S\su A^{k}=A^{\ell-1}$ to be the collection of $(\ell-1)$-tuples $(y_1,\dots,y_{\ell-1})\in A^{\ell-1}$ such that $y_1+\dots+y_{\ell-1}\in Y$. Then $|S|=\sum_{y\in Y} N_y\ge |A|^{\ell-1-\delta}= |A|^{k-\delta}$. We furthermore have
\[|\{y_1+\dots+y_{\ell-1}\mid (y_1,\dots,y_{\ell-1})\in S\}|\le |Y|\le |A|^{c-(3/4)}=|A|^{c''}.\] Therefore, by Theorem \ref{thm-borenstein-croot} there exists a subset $A'\su A$ of size $|A'|\ge |A|^{1-\eps}\ge |A|^{1/2}$ with
\[|\ell' A'|\le |A'|^{c''(1+\eps\ell')}\le |A'|^{c'},\]
where the second inequality follows from (\ref{eq-choice-eps}).

Note that
\[|A'|\ge |A|^{1/2}\ge D(c,p)^{1/2}\cdot (C(c))^{n/2}\ge D(c',p)\cdot (C(c'))^{n}.\]
This means that all assumptions are satisfied in Theorem \ref{thm-technical} for $c'$ (which was our induction hypothesis), recalling our choice of $\ell'$ and our assumption that $p$ is large enough. Thus, by applying Theorem \ref{thm-technical} for $c'$, we can conclude that  $A'$ contains $p$ distinct vectors $x_1,\dots,x_p\in A'$ with $x_1+\dots+x_p=0$. As $A'\su A$, this means in particular that $A$ contains $p$ such vectors.

\textbf{Case 2: $\boldsymbol{\sum_{y\in Y} N_y< |A|^{\ell-1-\delta}}$.} In this case, we have
\[\sum_{y\in Y} N_y< |A|^{\ell-1-\delta}\le \frac{|A|^{\ell-1}}{2^{\ell+1}\cdot p^{2\ell}\cdot (C'_{1/\ell})^{\ell n}},\]
using that $|A|\ge D(c,p)\cdot (C(c))^{n}\ge 2^{(\ell+1)/\delta}\cdot p^{2\ell/\delta}\cdot (C'_{1/\ell})^{\ell n/\delta}$. Recalling that $M_y\le |A|$ for all $y\in \Fpn$, this implies
\[\sum_{y\in Y} M_yN_y\le |A|\cdot \sum_{y\in Y} N_y\le \frac{|A|^{\ell}}{2^{\ell+1}\cdot p^{2\ell}\cdot (C'_{1/\ell})^{\ell n}}.\]
On the other hand, by the definition of $Y$, we have
\[\sum_{y\in \Fpn\setminus Y} M_yN_y\le \frac{|A|}{2^{\ell+1}\cdot p^{2\ell}\cdot (C'_{1/\ell})^{\ell n}}\cdot \sum_{y\in \Fpn\setminus Y} N_y\le \frac{|A|}{2^{\ell+1}\cdot p^{2\ell}\cdot (C'_{1/\ell})^{\ell n}}\cdot |A|^{\ell-1}=\frac{|A|^\ell}{2^{\ell+1}\cdot p^{2\ell}\cdot (C'_{1/\ell})^{\ell n}},\]
where the second inequality follows from (\ref{eq-bound-sum-N}). Thus, the number of bad subsets $\{x_1,\dots,x_\ell\}\su A$ is at most
\begin{equation}\label{eq-number-bad-subsets}
\sum_{y\in \Fpn} M_yN_y=\sum_{y\in Y} M_yN_y+\sum_{y\in \Fpn\setminus Y} M_yN_y\le \frac{|A|^\ell}{2^{\ell}\cdot p^{2\ell}\cdot (C'_{1/\ell})^{\ell n}}.
\end{equation}
Let
\[q=\frac{2p^2\cdot (C'_{1/\ell})^n}{|A|}, \]
and note that $0\le q\le 1$, since $|A|\ge D(c,p)\cdot (C(c))^{n}\ge 2^{(\ell+1)/\delta}\cdot p^{2\ell/\delta}\cdot (C'_{1/\ell})^{\ell n/\delta}\ge 2p^2\cdot (C'_{1/\ell})^n$.

Let us now consider a random subset $A^*\su A$ obtained by including every vector in $A$ into the subset $A^*$ with probability $q$, independently for all vectors in $A$. Then we have $\EE[|A^*|]=q\cdot |A|=2p^2\cdot (C'_{1/\ell})^n$.

Recall that every bad subset $\{x_1,\dots,x_\ell\}\su A$ consists of $\ell$ distinct vectors in $A$, and that we bounded the number of bad subsets in (\ref{eq-number-bad-subsets}). For each such bad subset $\{x_1,\dots,x_\ell\}\su A$, the probability of having $x_1,\dots,x_\ell\in A^*$ is $q^\ell$. Let $Y_\text{bad}$ be the number of bad subsets $\{x_1,\dots,x_\ell\}\su A$  with $x_1,\dots,x_\ell\in A^*$, then
\[\EE[Y_\text{bad}]\le q^\ell\cdot \frac{|A|^\ell}{2^{\ell}\cdot p^{2\ell}\cdot (C'_{1/\ell})^{\ell n}}=\frac{2^{\ell}p^{2\ell}\cdot (C'_{1/\ell})^{\ell n}}{|A|^{\ell}}\cdot \frac{|A|^\ell}{2^{\ell}\cdot p^{2\ell}\cdot (C'_{1/\ell})^{\ell n}}=1.\]

Hence
\[\EE\big[|A^*|-Y_\text{bad}\big]\ge 2p^2\cdot (C'_{1/\ell})^n - 1> p^2\cdot (C'_{1/\ell})^n.\]

Thus, there exists some outcome of the random subset $A^*\su A$ such that we have $|A^*|-Y_\text{bad}> p^2\cdot (C'_{1/\ell})^n$. For each of the $Y_\text{bad}$ bad subsets $\{x_1,\dots,x_\ell\}\su A$  with $x_1,\dots,x_\ell\in A^*$, let us now delete one of the elements $x_1,\dots,x_\ell\in A^*$ from the set $A^*$, and let $A'\su A^*$ be the set obtained this way. Then $A'\su A$ is a subset of size $|A'|\ge |A^*|-Y_\text{bad}> p^2\cdot (C'_{1/\ell})^n$ and there does not exist any  bad subset $\{x_1,\dots,x_\ell\}\su A$  with $x_1,\dots,x_\ell\in A'$.

Applying Proposition \ref{prop-repetitions-solutions} with $\lambda=1/\ell$ to the set $A'\su \Fpn$, we can find vectors $y_1,\dots,y_p\in A'$ with $y_1+\dots+y_p=0$ such that every vector in $\Fpn$ appears among $y_1,\dots,y_p$ at most $p/\ell$ times. Let $t$ and $r$ be non-negative integers such that $p=t\ell+r$ and $0\le r\le \ell-1$. In other words, this means that $t=\lfloor p/\ell\rfloor$ and $r=p-\lfloor p/\ell\rfloor\cdot \ell$.

Let us now consider a colouring of the set $\{1,\dots,p\}$ where the colours correspond to the different vectors appearing among $y_1,\dots,y_p$. In other words, two indices $i,j\in\{1,\dots,p\}$ receive the same colour if and only if $y_i=y_j$. Then every colour appears at most $p/\ell$ times on the set $\{1,\dots,p\}$, and so by Lemma \ref{lem-rainbow-sets}, there exists a partition of $\{1,\dots,p\}=S_1\cup\dots\cup S_{t+1}$ with $|S_j|=\ell$ for $j=1,\dots,t$ and $|S_{t+1}|=r$ such that for each $j=1,\dots,t+1$ all elements of $S_j$ have distinct colours. So for each $j=1,\dots,t+1$, the vectors $y_i$ with $i\in S_j$ are distinct.

Recalling that $y_1,\dots,y_p\in A'$ and $A'$ does not contain any bad subset, we can conclude that for each $j=1,\dots,t$, the set $\{y_i\mid i\in S_j\}$ is not bad. Hence, for $j=1,\dots,t$, the sum $\sum_{i\in S_j} y_i$ is not bad (since the vectors $y_i$ for $i\in S_j$ are distinct and $|S_j|=\ell$).

Let us now construct the desired distinct vectors $x_1,\dots,x_p\in A$ with $x_1+\dots+x_p=0$. We start by defining $x_i=y_i$ for $i\in S_{t+1}$ (recall that the vectors $y_i$ for $i\in S_{t+1}$ are distinct). Note that clearly $\sum_{i\in S_{t+1}} x_i=\sum_{i\in S_{t+1}} y_i$.

Now, for $j=1,\dots,t$, let us step by step replace the vectors $y_i$ for $i\in S_j$ with vectors $x_i\in A$ such that $\sum_{i\in S_j} x_i=\sum_{i\in S_j} y_i$. For each index $j=1,\dots,t$, recall that the sum $\sum_{i\in S_j} y_i$ is not bad. This means that the family of all  subsets $\{x'_1,\dots,x'_\ell\}\su A$ with distinct elements $x'_1,\dots,x'_\ell\in A$ satisfying $x'_1+\dots+x'_\ell=\sum_{i\in S_j} y_i$ does not have a cover of size at most $p$. Since we have chosen at most $p$ different vectors $x_i$ throughout our process so far, there must exist a subsets $\{x'_1,\dots,x'_\ell\}\su A$ with distinct elements $x'_1,\dots,x'_\ell\in A$ satisfying $x'_1+\dots+x'_\ell=\sum_{i\in S_j} y_i$, such that $\{x'_1,\dots,x'_\ell\}$ is disjoint from the set of all our previously chosen vectors $x_i$ for $i\in S_1\cup \dots\cup S_{j-1}\cup S_{t+1}$. Let us now assign the vectors $x_i$ for $i\in S_j$ to be $x'_1,\dots,x'_\ell$ (in arbitrary order). Then we have $\sum_{i\in S_j} x_i=x'_1+\dots+x'_\ell=\sum_{i\in S_j} y_i$, and the vectors $x_i$ for $i\in S_j$ are distinct and are also distinct from all the vectors $x_i$ for $i\in S_1\cup \dots\cup S_{j-1}\cup S_{t+1}$.

Continuing this process step by step for all $j=1,\dots,t$, in the end we obtain distinct vectors $x_i\in A$ for all $i\in S_1\cup\dots\cup S_{t+1}=\{1,\dots,p\}$ such that $\sum_{i\in S_j} x_i=\sum_{i\in S_j} y_i$ for $j=1,\dots,t+1$. In other words, $x_1,\dots,x_p\in A$ are distinct vectors, and we have
\[x_1+\dots+x_p=\sum_{j=1}^{t+1}\ \sum_{i\in S_j} x_i=\sum_{j=1}^{t+1}\ \sum_{i\in S_j} y_i=y_1+\dots+y_p=0.\]
This finishes the proof.

\end{document}